\def\IR{\mathbb{R}}
\def\IC{\mathbb{C}}
\def\E{\mathcal{E}}
\def\H{\mathcal{H}}
\def\K{\mathcal{K}}
\def\a{\mathfrak{a}}
\def\b{\mathfrak{b}}
\def\lra{\longrightarrow}
\def\downto{\downarrow}
\DeclareMathOperator{\supp}{supp}
\DeclareMathOperator{\sgn}{sgn}
\renewcommand{\Re}{\operatorname{Re}}
\def\loc{\mathrm{loc}}
\providecommand{\abs}[1]{\lvert#1\rvert}
\providecommand{\norm}[1]{\lVert#1\rVert}
\renewcommand{\epsilon}{\varepsilon}
\renewcommand{\phi}{\varphi}
\newtheoremstyle{Beispiel}{}{}{}{}{\bfseries}{:}{ }{}
\theoremstyle{Beispiel}
\newtheorem{example}{Example}[section]
\newtheorem{remark}[example]{Remark}
\newtheoremstyle{Satz}{}{}{\itshape}{}{\bfseries}{:}{ }{}
\theoremstyle{Satz}
\newtheorem{proposition}[example]{Proposition}
\newtheorem{definition}[example]{Definition}
\newtheorem{theorem}[example]{Theorem}
\newtheorem{lemma}[example]{Lemma}
\newtheorem{corollary}[example]{Corollary}
\newcommand{\vol}{\mathrm{vol}}
\newcommand{\End}{\mathrm{End}}
\newcommand{\Hmm}[1]{\leavevmode{\marginpar{\tiny%
$\hbox to 0mm{\hspace*{-0.5mm}$\leftarrow$\hss}%
\vcenter{\vrule depth 0.1mm height 0.1mm width \the\marginparwidth}%
\hbox to 0mm{\hss$\rightarrow$\hspace*{-0.5mm}}$\\\relax\raggedright
#1}}}
\begin{document}
\frenchspacing 

\title[Domination of quadratic forms]{Domination of quadratic forms}

\author[Lenz]{Daniel Lenz}
\address{D. Lenz, Mathematisches Institut\\Friedrich-Schiller-Universität Jena\\07737 Jena, Germany}
\email{daniel.lenz@uni-jena.de}

\author[Schmidt]{Marcel Schmidt}
\address{M. Schmidt, Mathematisches Institut\\Friedrich-Schiller-Universität Jena\\07737 Jena, Germany}
\email{schmidt.marcel@uni-jena.de}

\author[Wirth]{Melchior Wirth}
\address{M. Wirth, Mathematisches Institut\\Friedrich-Schiller-Universität Jena\\07737 Jena, Germany}
\email{melchior.wirth@uni-jena.de}

\date{\today}

\begin{abstract}
We study domination of quadratic forms in the abstract setting of
ordered Hilbert spaces. Our main result gives a characterization in
 terms of the associated forms. This generalizes and unifies
various earlier works. Along the way we present several examples.
\end{abstract}

\maketitle


\begingroup
\def\addvspace#1{}
\tableofcontents
\endgroup

\section*{Introduction}
Domination of operators is a way to compare two  operators $A, B$
acting on possibly different Hilbert spaces via an inequality of the
form
$$|A f| \leq B |f|,$$
where the exact meaning of $\abs{\cdot}$ and $\leq$ is  to be
explained later. For now the reader may well think that both Hilbert
spaces are  the same $L^2$-space and $\abs{\cdot}$ just denotes the
usual modulus.

Domination  appears in several disguises and contexts. In connection
with questions of essential self-adjointness, it probably occurred
first in the form of Kato's inequality in the context of comparing
Schr\"odinger operators with and without magnetic field (cf.
\cite{Kat72}). This work has been fairly influential and various
generalizations have been considered subsequently.  Specifically,
Simon showed  in \cite{Sim77} that validity of Kato's inequality for
the generator of a symmetric semigroup in an $L^2$-space is
equivalent to the semigroup being positivity preserving (which can
be understood as being dominated by itself). For two  symmetric
semigroups acting on the same $L^2$-space he showed that domination implies a Kato type inequality for their generators and conjectured these two properties to be equivalent. This conjecture was resolved independently by Simon himself \cite{Sim79b} in the case of two semigroups acting on the same $L^2$-space, and by Hess, Schrader, Uhlenbrock
\cite{HSU} in an even more abstract framework. In that framework the
underlying Hilbert space does not need to be an $L^2$-space and the
two semigroups in question do not need to be defined on the same
Hilbert space. It suffices that the two Hilbert spaces in question
are related via a map -- called absolute pairing or symmetrization --
generalizing the classical modulus function (see \cite{Ber} for a
description of that setting as well).

The preceding works all deal with the generators of the semigroups.
It is also possible (and natural) to study  domination via the
associated forms. A characterization of domination via forms  for
semigroups acting on the same $L^2$-space was first given  by
Ouhabaz  \cite{Ouh96}. For semigroups acting on different
$L^2$-space (one of them even allowed to consist of vector valued
functions)  an analogous  result was then obtained by Manavi, Vogt,
Voigt \cite{MVV}. This result does not require symmetry of the form
but only sectoriality.  While the actual reasoning in the mentioned
works \cite{Ouh96} and \cite{MVV} is somewhat different, they both
rely on a characterization of invariance of a convex set under a
semigroup via the associated form presented in \cite{Ouh96}.

The main  aim  of the present article is to bring together the
general setting of \cite{HSU} with the  point of view of forms given
in \cite{Ouh96,MVV}. Our main  result gives a characterization of
domination of semigroups  in the general framework presented in
\cite{HSU} thereby generalizing corresponding parts of
\cite{Ouh96,MVV}. This rather general setting  opens up the
possibility of new applications of this characterization. To achieve
it we basically follow the strategy of \cite{MVV}. On the technical
level this requires quite some efforts as we can not rely on
pointwise considerations. Indeed,  the main obstacle to overcome is
the lack of points in our setting.

\smallskip

The article is organized as follows: In Section \ref{Background} we
recall the necessary background material and present the setting of
\cite{HSU}. Here,  we take special care to keep the article as
accessible and self-contained as possible by including several
proofs of basic properties. In particular, we give a new and simple
proof of the fact that a self-dual positive cone in a Hilbert space
induces a lattice order if and only if the projection onto the cone
is monotone (Theorem \ref{isotone_latticial}). Section \ref{new}
contains the main new ingredient of our work viz an extension of
Lemma 3.2 of \cite{MVV} to our setting (Proposition
\ref{domination_invariance}). It is there that we deal with the
mentioned lack of points in our setting. Section
\ref{sec_Domination_of_operators} provides the characterization of
dominance (Theorem \ref{thm_char_domination}). Finally, in Section
\ref{Applications} we discuss various classes of examples of
dominated semigroups.  These include perturbation by potentials,
magnetic Schroedinger operators on manifolds and magnetic
Schroedinger operators on graphs. Along the way, we give an
introduction into domination in general and survey various results.

\smallskip

The article has its origin in the master's thesis of one of the
authors (M.~W.).

\bigskip

\textbf{Acknowledgments.} M.~S. and M.~W. gratefully acknowledge
financial support of the DFG via \emph{Graduiertenkolleg: Quanten-
und Gravitationsfelder}. M.~W. gratefully acknowledges financial
support of the \emph{Studienstiftung des deutschen Volkes}. D.~L.
gratefully acknowledges partial support by DFG as well as 
enlightening discussions with Peter Stollmann on Dirichlet forms and
domination of semigroups. M.~W. would like to thank Ognjen Milatovic
for several helpful remarks on a preliminary version of this
article.

\section{Background on positive cones, absolute pairings between Hilbert spaces and domination of
operators} \label{Background} In this section we provide the
necessary background. Most of the material of this section is known.

\subsection{Positive cones and forms satisfying the first Beurling-Deny criterion}\label{Background-positivity} In this section we collect some
basics about order structures in Hilbert spaces induced by a positive
cone. All the material here is certainly well-known -- see
\cite{Nem03} for a (very short) introduction. To make this work
self-contained, we included proofs of the elementary facts.

\smallskip

As it is convenient for order theory, we only deal with vector
spaces over $\IR$ in this section.

\medskip

The basic ingredient for all considerations concerning order in the
present article are positive cones. They are defined next.

\begin{definition}[Positive Cone]
Let $\K$ be a Hilbert space. A closed, non-empty subset $\K_+$ of
$\K$ is called positive cone if
\begin{itemize}
\item[(P1)]$\K_+ +\K_+\subset \K_+$,
\item[(P2)]$ \alpha\K_+\subset \K_+$ for all $ \alpha\geq 0$,
\item[(P3)]$\langle\K_+,\K_+\rangle\geq 0$.
\end{itemize}
The relation  $\leq$ on $\K$  associated to $\K_+$ is given by
$$g_1\leq g_2$$ whenever $g_1,g_2\in \K$ satisfy  $g_2-g_1\in\K_+$.

The positive cone $\K_+$ is said to be self-dual if
\begin{align*}
\K_+=\{g\in \K\mid \langle g,h\rangle\geq 0\text{ for all }h\in\K_+\}.
\end{align*}
The positive cone $\K_+$ is called an isotone projection cone if the
projection $P_{\K_+}$ onto $\K_+$ is monotone increasing with
respect to $\leq$, that is, $g\leq h$ implies $P_{\K_+}(g)\leq
P_{\K_+}(h)$ for $g,h\in\K$.
\end{definition}

\begin{remark}\label{simple-properties}
\begin{itemize}
\item It is not hard to see that $\leq$ is  a partial order
i.e.  reflexive ($f\leq f$), antisymmetric ($f\leq g$ together with
$g\leq f$ implies $f =g$)  and transitive $f\leq g, g\leq h$ implies
$f\leq h$). Moreover, this partial order clearly  makes $\K$ into an
ordered vector space that is  $f\leq g$ implies $f+h\leq g+h$, as
well as $ \alpha f\leq \alpha g$ for all $h\in\K$ and $\alpha \geq
0$ (see Definition \ref{Riesz-space} below as well).

\item  If $\K_+\subset\K$ is a positive cone, the dual cone is defined as
\begin{align*}
\K_+^\circ=\{g\in\K\mid\langle g,h\rangle\geq 0\text{ for all }h\in \K_+\}.
\end{align*}
Hence a cone is self-dual if and only if it coincides with its dual. Sometimes the dual cone is also called polar cone (and self-dual cones are called self-polar), but the usual convention seems to be that the polar cone of $\K_+$ is $-\K_+^\circ$. Note that the dual cone may fail to satisfy (P3) and thus not be a positive cone.
\item The projection $P_C$ onto a closed, convex subset $C$ of a Hilbert space $H$ maps $x\in H$ to the unique element $P_C(x)\in C$ that satisfies $\norm{P_C(x)-x}=d(x,C)$. It is characterized as the unique $z\in C$ that satisfies $\langle x-z,y-z\rangle\leq 0$ for all $y\in C$.
\end{itemize}
\end{remark}

In most cases we will be interested in the following example.
\begin{example}\label{example_L2}
Let $(X,\mathcal{B},m)$ be a measure space. Then
\begin{align*}
L^2_+(X,m)=\{f\in L^2(X,m)\mid f\geq 0\;m\text{-almost everywhere}\}
\end{align*}
is a self-dual isotone projection cone in $L^2(X,m)$.
\end{example}

\begin{remark}\label{lattice_L2}
It can be shown (cf. \cite{Pen76}, Corollary II.4) that all
self-dual isotone projection cones arise in this way: If $\K_+\subset
\K$ is a self-dual cone that induces a lattice order on $\K$,
then there is a compact space $X$, a regular finite Borel measure $\mu$
on $X$, and a unitary $U\colon \K\lra L^2(X,m)$ such that
$U\K_+=L^2_+(X,m)$. (That self-dual isotone projection cones indeed
satisfy the assumption  of inducing a lattice order is the
content of Theorem \ref{isotone_latticial}.)
\end{remark}

\begin{example}\label{example_matrices}
Let $\K$ be the Hilbert space of all self-adjoint $n\times n$ matrices endowed with the Hilbert-Schmidt inner product. The subset $\K_+$ of all positive semidefinite matrices is a self-dual positive cone in $\K$. It is not an isotone projection cone unless $n=1$.
\end{example}

\begin{example}\label{ex_cone_decreasing_fcts}
Let $\K=L^2([0,\infty),r\,dr)$. The subset
\begin{align*}
\K_+=\{f\in L^2([0,\infty),r\,dr)\mid f\geq 0,\,f'\leq 0\},
\end{align*}
where $f'$ is the distributional derivative, is a positive cone in $\K$. It is not self-dual.
\end{example}

An important tool when dealing with positive cones is Moreau's
Theorem (cf. \cite{M62}). It shows that in Hilbert spaces with a
positive cone there is a decomposition
$$ g =P_{\K_+}(g) -P_{\K_+^\circ}(-g).$$
This decomposition can be seen as providing an abstract analog of
the decomposition of a function into positive and negative part.
Indeed, based on the theorem we will subsequently discuss various
instances of this analogy. We include a proof of Moreau's theorem
for completeness sake.

\begin{theorem}[Moreau]\label{Moreau}
Let $\K$ be a Hilbert space and $\K_+\subset\K$ a positive cone. For $g,h_1,h_2\in\K$ the following statements are equivalent:
\begin{itemize}
\item[(i)]$g=h_1-h_2$, $h_1\in\K_+$, $h_2\in \K_+^\circ$, $\langle h_1,h_2\rangle=0$
\item[(ii)]$h_1=P_{\K_+}(g)$, $h_2=P_{\K_+^\circ}(-g)$.
\end{itemize}
\end{theorem}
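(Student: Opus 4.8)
The plan is to prove the two implications separately, relying throughout on the variational characterization of the projection recalled in Remark~\ref{simple-properties}: for a closed convex set $C$, the point $z=P_C(x)$ is the unique $z\in C$ with $\langle x-z,y-z\rangle\leq 0$ for all $y\in C$. Both $\K_+$ (by (P1) and (P2)) and $\K_+^\circ$ (as an intersection of closed half-spaces through the origin) are closed convex cones containing $0$, so the two projections are well defined.

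For the implication (i)$\Rightarrow$(ii), I would assume $g=h_1-h_2$ with $h_1\in\K_+$, $h_2\in\K_+^\circ$ and $\langle h_1,h_2\rangle=0$, and simply verify the variational inequalities. For $h_1=P_{\K_+}(g)$: given $y\in\K_+$ one computes $\langle g-h_1,y-h_1\rangle=\langle -h_2,y-h_1\rangle=-\langle h_2,y\rangle+\langle h_2,h_1\rangle=-\langle h_2,y\rangle\leq 0$, using $\langle h_1,h_2\rangle=0$ and $\langle h_2,y\rangle\geq 0$ (since $h_2\in\K_+^\circ$, $y\in\K_+$). For $h_2=P_{\K_+^\circ}(-g)$: from $-g=h_2-h_1$ the symmetric computation gives $\langle -g-h_2,y-h_2\rangle=-\langle h_1,y\rangle\leq 0$ for $y\in\K_+^\circ$, now using $h_1\in\K_+$ and the defining property of $y\in\K_+^\circ$.

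For the implication (ii)$\Rightarrow$(i), the memberships $h_1\in\K_+$ and $h_2\in\K_+^\circ$ are automatic, so the task is to recover the algebraic relation $g=h_1-h_2$ and the orthogonality. Here I would first sharpen the projection characterization for cones: testing the variational inequality for $h_1=P_{\K_+}(g)$ against $y=0$ and $y=2h_1$, both of which lie in $\K_+$, forces $\langle g-h_1,h_1\rangle=0$, after which the inequality reduces to $\langle g-h_1,y\rangle\leq 0$ for all $y\in\K_+$, that is, $h_1-g\in\K_+^\circ$. Setting $h_2':=h_1-g\in\K_+^\circ$ then yields $g=h_1-h_2'$ with $h_1\in\K_+$ and $\langle h_1,h_2'\rangle=0$; in other words the triple $(g,h_1,h_2')$ satisfies (i). Applying the already established implication (i)$\Rightarrow$(ii) identifies $h_2'=P_{\K_+^\circ}(-g)=h_2$, which closes the argument.

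The main obstacle is precisely the step (ii)$\Rightarrow$(i): the two projections are constructed independently, and there is no a priori reason that they should reassemble to give $g$ or be orthogonal. The device that makes it work is the cone-specific identity $\langle g-h_1,h_1\rangle=0$, extracted from the homogeneity of $\K_+$; it simultaneously produces the candidate $h_2'$ and allows one to route the conclusion back through (i)$\Rightarrow$(ii) rather than computing $P_{\K_+^\circ}(-g)$ by hand.
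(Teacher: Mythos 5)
Your proposal is correct and follows essentially the same route as the paper: both directions hinge on the variational characterization of the projection, and the key step in (ii)$\Rightarrow$(i) is the same identity $\langle g-P_{\K_+}(g),P_{\K_+}(g)\rangle=0$ obtained by testing against $0$ and $2P_{\K_+}(g)$. The only cosmetic difference is that you identify $h_1-g$ with $P_{\K_+^\circ}(-g)$ by citing the already proven implication (i)$\Rightarrow$(ii), whereas the paper re-verifies the corresponding variational inequality directly.
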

\begin{proof}
(i)$\implies$(ii): Let $h_1$, $h_2$ be as in (i) and $h\in \K_+$. Then
\begin{align*}
\langle g-h_1,h-h_1\rangle=\langle -h_2,h-h_1\rangle=-\langle h_2,h\rangle\leq 0.
\end{align*}
Thus $h_1=P_{\K_+}(g)$. The proof for $h_2=P_{\K_+^\circ}(-g)$ is analogous.

(ii)$\implies$(i): Let $h_1$, $h_2$ be as in (ii). By definition, $h_1\in\K_+$ and $h_2\in \K_+^\circ$. Moreover,
\begin{align*}
\langle g-h_1,h-h_1\rangle\leq 0
\end{align*}
for all $h\in \K_+$. Inserting $h=0$ and $h=2h_1$, we get $\langle g-h_1,h_1\rangle=0$.

Hence
\begin{align*}
0\geq \langle g-h_1,h-h_1\rangle=\langle g-h_1,h\rangle
\end{align*}
for all $h\in \K_+$, which implies $h_1-g\geq 0$.

Finally, we infer from
\begin{align*}
\langle -g-(h_1-g),h-(h_1-g)\rangle=\langle -h_1,h-(g-h_1)\rangle=-\langle h_1,h\rangle\leq 0
\end{align*}
for all $h\in\K_+^\circ$ that $h_1-g=P_{\K_+^\circ}(-g)=h_2$.
\end{proof}

Here is  a first consequence of Moreau's theorem giving basic
properties of the decomposition $g =P_{\K_+}(g)
-P_{\K_+^\circ}(-g)$.

\begin{lemma}\label{norm_Riesz_monotone}
Let $\K$ be a Hilbert space and $\K_+\subset \K$ a positive cone.
Then $ \norm\cdot\colon \K_+\lra[0,\infty)$ is monotone increasing
and $\norm{g}=\norm{P_{\K_+}(g)+P_{\K_+^\circ}(-g)}$ for all
$g\in\K$.
\end{lemma}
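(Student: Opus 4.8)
The plan is to establish both claims by direct Hilbert-space computations: the monotonicity uses only the positivity axiom (P3), while the norm identity rests on the orthogonality built into Moreau's Theorem.

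For the monotonicity of the norm on $\K_+$, I would take $g,h\in\K_+$ with $g\leq h$, so that $h-g\in\K_+$, and expand
\[
\norm{h}^2=\norm{g+(h-g)}^2=\norm{g}^2+2\langle g,h-g\rangle+\norm{h-g}^2.
\]
Since both $g$ and $h-g$ lie in $\K_+$, axiom (P3) gives $\langle g,h-g\rangle\geq 0$, whence $\norm{h}^2\geq\norm{g}^2$ and therefore $\norm{h}\geq\norm{g}$. This settles the first assertion.

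For the norm identity, I would set $h_1=P_{\K_+}(g)$ and $h_2=P_{\K_+^\circ}(-g)$. By Moreau's Theorem (Theorem \ref{Moreau}) these elements satisfy $g=h_1-h_2$ together with the orthogonality relation $\langle h_1,h_2\rangle=0$. The key point is that this orthogonality renders the norm insensitive to the sign in front of $h_2$: expanding both sides gives
\[
\norm{g}^2=\norm{h_1-h_2}^2=\norm{h_1}^2+\norm{h_2}^2=\norm{h_1+h_2}^2,
\]
where the outer equalities both use $\langle h_1,h_2\rangle=0$. Taking square roots yields $\norm{g}=\norm{P_{\K_+}(g)+P_{\K_+^\circ}(-g)}$.

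I do not expect any serious obstacle here; both computations are one-line applications of the quadratic expansion of the norm. The only genuine input is Moreau's Theorem, which supplies the crucial orthogonality $\langle h_1,h_2\rangle=0$ — without it the sign change in $-h_2$ would in general alter the norm. Everything else reduces to the positivity axiom (P3) and the bilinearity of the inner product.
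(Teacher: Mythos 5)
Your proof is correct and follows essentially the same route as the paper: the norm identity is obtained exactly as in the paper from the orthogonality $\langle P_{\K_+}(g),P_{\K_+^\circ}(-g)\rangle=0$ supplied by Moreau's Theorem. The only (immaterial) difference is in the monotonicity step, where you expand $\norm{h}^2=\norm{g+(h-g)}^2$ and drop the nonnegative terms, while the paper bounds $\langle h,g\rangle$ by Cauchy--Schwarz; both rest solely on (P3).
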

\begin{proof}
Let $g,h\in \K_+$ such that $g\leq h$. Then we have
\begin{align*}
0\leq \langle h-g,g\rangle=\langle h,g\rangle-\norm g^2\leq \norm
h\norm g-\norm g^2,
\end{align*}
hence $\norm g\leq\norm h$. Moreover,
\begin{align*}
\norm{g}^2&=\norm{
P_{K_+}(g)-P_{\K_+^\circ}(g)}^2=\norm{P_{\K_+}(g)+P_{\K_+^\circ}(-g)}^2
\end{align*}
since $\langle P_{\K_+}(g),P_{\K_+^\circ}(-g)\rangle=0$ by Moreau's
Theorem \ref{Moreau}.
\end{proof}

We now turn to studying the order structure coming from a positive
cone discussed in  Remark \ref{simple-properties}. In particular, we
will discuss the connection between Riesz spaces (to be defined
next) and positive cones.

\begin{definition}[Riesz space]\label{Riesz-space}
A vector space $E$ with a partial order $\leq$ is called ordered
vector space if for all $f,g,h\in E$, $ \alpha\geq 0$ the following
properties hold:
\begin{itemize}
\item[(R1)]$f\leq g$ implies $f+h\leq g+h$,
\item[(R2)]$f\leq g$ implies $ \alpha f\leq  \alpha g$.
\end{itemize}
If additionally
\begin{itemize}
\item[(R3)]$\{f,g\}$ has a least upper bound $f\vee g$,
\end{itemize}
 for all $f,g\in E$, then $E$ is called a Riesz space. A subspace
$F$ of a Riesz space $E$ is called sublattice if $f,g\in F$ implies
$f\vee g\in F$.
\end{definition}
\begin{remark}
The partial order induced by the cone from Example \ref{example_L2} makes $L^2(X,m)$ into a Riesz space with minima and maxima given by the respective pointwise almost everywhere operations.

The space of self-adjoint $n\times n$ matrices with the partial order induced by the cone from Example \ref{example_matrices} is not a Riesz space unless $n=1$ (compare also Theorem \ref{isotone_latticial}).
\end{remark}

If $E$ is an ordered vector space, then  a least  upper bound can
easily be seen to be  necessarily unique (if it exists). Similarly,
a greatest lower bound will be unique if it exists. It will be
denoted by $f\wedge g$ (if it exists).  Clearly, for any $f,g$ in a
Riesz space both least upper bounds and greatest lower bound exist
and are unique.

\begin{definition}[Positive and negative part]
Let $E$ be an ordered vector space. For $f\in E$ the positive and
negative part are defined as $f_{\pm}=(\pm f)\vee 0$ if they exist.
In this case the absolute value is defined as $\abs{f}=f_+ +f_-$.
\end{definition}

\begin{remark}
Let $E$ be an ordered vector space. If $f\vee 0$ exists for all $f\in E$, then its is not hard to see that $E$ is a Riesz space and the lattice operations are given by
\begin{align*}
f\wedge g&=\frac 1 2(f+g-\abs{f-g}),\\
f\vee g&=\frac 1 2(f+g+\abs{f-g})
\end{align*}
for all $f,g\in E$.
\end{remark}

It is very natural to think about  $f_{\pm}$ as analogs of positive
and negative part of a function. So, in Hilbert spaces with order
coming from a positive cone there  are now  both $f_+$ (if it
exists) and $P_{\K_+}(f)$ obvious  candidates for the positive part
of $f$. Luckily, these two agree (under an additional condition).

\begin{lemma}\label{positive_part_projection}
Let $\K$ be a Hilbert space, $\K_+\subset \K$ a self-dual isotone
projection cone, and $g\in\K$. Then $P_{\K_+}(g)$ is the least upper
bound of $\{0,g\}$.
\end{lemma}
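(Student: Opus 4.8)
The plan is to verify directly the three conditions characterizing the least upper bound of $\{0,g\}$, writing $p := P_{\K_+}(g)$ throughout. That $p$ is an \emph{upper bound} of both $0$ and $g$ will follow quickly from Moreau's theorem together with self-duality; the \emph{leastness} is exactly where the isotone projection hypothesis enters, so that is the step I regard as the heart of the matter.

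First I would record that $p\in\K_+$ by the very definition of the projection onto $\K_+$, so that $p\geq 0$. To see $p\geq g$, I would invoke Moreau's Theorem \ref{Moreau}. Since $\K_+$ is self-dual, we have $\K_+^\circ=\K_+$, so the decomposition there reads $g=P_{\K_+}(g)-P_{\K_+^\circ}(-g)=p-P_{\K_+}(-g)$. Hence $p-g=P_{\K_+}(-g)\in\K_+$, which is precisely the statement $p\geq g$. Thus $p$ is an upper bound of $\{0,g\}$, and both these facts rest only on Moreau's decomposition and on the self-duality used to rewrite $\K_+^\circ$ as $\K_+$.

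For leastness, let $h\in\K$ be an arbitrary upper bound of $\{0,g\}$, so $h\geq 0$ and $h\geq g$. The first inequality means $h\in\K_+$, whence $P_{\K_+}(h)=h$ because the projection fixes every point already lying in the cone. Applying the monotonicity of $P_{\K_+}$ (the isotone projection property) to the relation $g\leq h$ then yields $p=P_{\K_+}(g)\leq P_{\K_+}(h)=h$. Therefore $p\leq h$ for every upper bound $h$, and combined with the previous paragraph this shows that $p$ is the least upper bound of $\{0,g\}$, as claimed. I expect no genuine obstacle: the only point requiring a moment's care is the identification $P_{\K_+}(h)=h$ for $h\in\K_+$, after which the isotone hypothesis does all the remaining work in a single line.
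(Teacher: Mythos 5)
Your proof is correct and follows essentially the same route as the paper's: Moreau's theorem combined with self-duality gives that $P_{\K_+}(g)$ is an upper bound, and the isotone projection property applied to $g\leq h$ together with $P_{\K_+}(h)=h$ for $h\in\K_+$ gives leastness. The only difference is that you spell out the intermediate identifications slightly more explicitly.
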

\begin{proof}
Let $g\in\K$. By Moreau's Theorem \ref{Moreau} we have
\begin{align*}
g=P_{\K_+}(g)-P_{\K_+}(-g)\leq P_{\K_+}(g).
\end{align*}
Hence, $P_{K_+}(g)$ is an upper bound for $\{0,g\}$. Now let $h$ be an upper bound for $\{0,g\}$. By isotonicity we have $P_{\K_+}(g)\leq P_{\K_+}(h)=h$. Thus, $P_{\K_+}(g)$ is the least upper bound of $\{0,g\}$.
\end{proof}

\begin{lemma}\label{projection_Riesz}
Let $\K$ be a Hilbert space and $\K_+\subset \K$ a self-dual
positive cone such that $(\K,\leq)$ is a Riesz space. If $g\in\K$,
then $g_+=P_{\K_+}(g)$ and $g_-=P_{\K_+}(-g)$.
\end{lemma}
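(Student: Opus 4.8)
The plan is to reduce the statement to the uniqueness clause of Moreau's Theorem \ref{Moreau}. Since $\K_+$ is self-dual we have $\K_+^\circ=\K_+$, so Moreau's theorem applied to $g$ yields the decomposition $g=P_{\K_+}(g)-P_{\K_+}(-g)$, where both summands lie in $\K_+$ and $\langle P_{\K_+}(g),P_{\K_+}(-g)\rangle=0$. Abbreviate $a:=P_{\K_+}(g)$ and $b:=P_{\K_+}(-g)$; the goal is to identify $a$ with the lattice positive part $g_+$ and $b$ with $g_-$.

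First I would check that $a$ and $b$ are upper bounds for the relevant pairs. As $a\in\K_+$ we have $a\geq 0$, and since $a-g=b\in\K_+$ we have $g\leq a$; hence $a$ is an upper bound of $\{0,g\}$, and therefore $g_+\leq a$ by definition of the least upper bound. Applying the same reasoning to $-g=b-a$ gives $g_-\leq b$.

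The key step is then to exploit the orthogonality. Using the identity $g=g_+-g_-$, valid in any Riesz space, together with $g=a-b$, I set $c:=a-g_+=b-g_-$, which lies in $\K_+$ by the previous step. Expanding the orthogonality relation gives
$$0=\langle a,b\rangle=\langle g_++c,\,g_-+c\rangle=\langle g_+,g_-\rangle+\langle g_+,c\rangle+\langle c,g_-\rangle+\norm{c}^2.$$
Since $g_+,g_-,c$ all belong to $\K_+$ and $\K_+$ is self-dual, every term on the right-hand side is nonnegative; as their sum vanishes, each vanishes individually. In particular $\norm{c}^2=0$, so $c=0$, and hence $a=g_+$ and $b=g_-$, as claimed.

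I expect the middle step to be the main obstacle. Unlike in Lemma \ref{positive_part_projection}, we may not invoke isotonicity of the projection, so the identification cannot be read off directly from monotonicity. The decisive trick is to isolate the ``defect'' $c$ between the two decompositions of $g$ and to use self-duality to force all cross terms — and thereby $c$ itself — to vanish.
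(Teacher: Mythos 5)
Your proof is correct and takes essentially the same route as the paper's: both rest on Moreau's Theorem~\ref{Moreau}, the bounds $g_+\leq P_{\K_+}(g)$ and $g_-\leq P_{\K_+}(-g)$, and the nonnegativity of inner products of cone elements. The only cosmetic difference is in the last step: the paper squeezes $\langle g_+,g_-\rangle$ between $0$ and $\langle P_{\K_+}(g),P_{\K_+}(-g)\rangle=0$ and then invokes the uniqueness clause of Moreau for the decomposition $g=g_+-g_-$, whereas you expand $\langle P_{\K_+}(g),P_{\K_+}(-g)\rangle$ to force the defect $c$ to vanish directly.
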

\begin{proof}
First note that $g_+,g_-\in \K_+$ and $g=g_+ - g_-$ by \cite{Sch71},
V.1.1. By Moreau's theorem \ref{Moreau} it suffices to show that
$\langle g_+,g_-\rangle=0$.

As $g=P_{\K_+}(g)-P_{\K_+}(-g)\leq P_{\K_+}(g)$ and $P_{\K_+}(g)\geq
0$ by definition, we have $g_+\leq P_{\K_+}(g)$. Analogously,
$g_-\leq P_{\K_+}(-g)$. Thus
\begin{flalign*}
&&0\leq \langle g_+,g_-\rangle\leq \langle P_{\K_+}(g),P_{\K_+}(-g)\rangle=0.&&\qedhere
\end{flalign*}
\end{proof}

After these preparations we can now unravel  the connection between
Riesz spaces and positive cones in Hilbert spaces.  The implication
that every isotone projection cone induces a lattice order is due to
Isac and Németh (see \cite{IN}, Proposition 3), the converse
implication due to Németh (see \cite{Nem03}, Theorem 3). Both
results do not assume the cone to be self-dual. As the proof is
considerably simpler in the self-dual case, which is our main
interest,  we include it here for that case only.

\begin{theorem}\label{isotone_latticial}
Let $\K$ be a Hilbert space and $\K_+\subset\K$ a self-dual positive
cone. Then $(\K,\leq)$ is a Riesz space if and only if $\K_+$ is an
isotone projection cone.
\end{theorem}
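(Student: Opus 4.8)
The plan is to prove the two implications separately; each turns out to be essentially an assembly of the lemmas established above, once one notices that self-duality forces the metric projection onto $\K_+$ to coincide with the order-theoretic positive part. The two workhorses are Lemma \ref{positive_part_projection} and Lemma \ref{projection_Riesz}, both of which are tailored precisely to one of the two directions.

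For the implication that an isotone projection cone induces a Riesz space, I would argue as follows. By Lemma \ref{positive_part_projection}, for a self-dual isotone projection cone the element $P_{\K_+}(g)$ is exactly the least upper bound of $\{0,g\}$; in other words $g\vee 0$ exists for every $g\in\K$. By the remark following the definition of positive and negative part, the existence of $f\vee 0$ for all $f$ in an ordered vector space already guarantees that it is a Riesz space, with the lattice operations expressible through $\abs{f-g}$. Since $(\K,\leq)$ is an ordered vector space by Remark \ref{simple-properties}, this yields the claim directly.

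For the converse, suppose $(\K,\leq)$ is a Riesz space. Here Lemma \ref{projection_Riesz} is the crucial ingredient: it identifies $P_{\K_+}(g)=g_+=g\vee 0$. Thus monotonicity of $P_{\K_+}$ reduces to monotonicity of the map $g\mapsto g\vee 0$, a standard feature of any lattice: if $g\leq h$, then $h\vee 0$ is an upper bound of both $g$ and $0$, hence dominates the least such upper bound $g\vee 0$, giving $g\vee 0\leq h\vee 0$. Therefore $g\leq h$ implies $P_{\K_+}(g)\leq P_{\K_+}(h)$, that is, $\K_+$ is an isotone projection cone.

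The substantive work has in fact already been carried out in Lemmas \ref{positive_part_projection} and \ref{projection_Riesz}, both of which rest on Moreau's decomposition (Theorem \ref{Moreau}); granting them, the theorem becomes a short matter of bookkeeping. The only point demanding (minor) care is the monotonicity of the join, but this is elementary and holds in any lattice, so I do not anticipate any genuine obstacle. The conceptual heart — namely that self-duality is exactly what makes the Hilbert-space projection agree with the lattice positive part — is already packaged into the two lemmas, which is why the proof of the equivalence itself is so brief.
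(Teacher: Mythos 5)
Your proof is correct and follows essentially the same route as the paper: the forward direction via Lemma \ref{projection_Riesz} and the elementary monotonicity of $g\mapsto g\vee 0$, and the converse via Lemma \ref{positive_part_projection} together with the observation that existence of $g\vee 0$ for all $g$ already yields a Riesz space. No gaps.
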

\begin{proof}
First assume that $(\K,\leq)$ is a Riesz space. By Lemma
\ref{projection_Riesz}, $P_{\K_+}(g)=g_+$ for all $g\in \K$. If
$g\leq h$, then $g_+\leq h_+$ follows immediately from the
definition.

Now assume that $\K_+$ is an isotone projection cone. It is easy to
see that it suffices to show that $\{0,g\}$ has a least upper bound
for all $g\in \K$, and this was proven in Lemma
\ref{positive_part_projection}.
\end{proof}

Having studied some of the basic order properties of Hilbert spaces,
we will now turn to forms on ordered Hilbert spaces that are
compatible with this order structure. By a (quadratic form) we
always mean a densely defined, lower bounded quadratic form.

\begin{definition}[First Beurling-Deny criterion]
Let $\K$ be a Hilbert space and $\K_+\subset\K$ a positive cone. A form $\b$ in $\K$ is said to satisfy the first Beurling-Deny criterion if $P_{\K_+}D(\b)\subset D(\b)$ and
\begin{align*}
\b(P_{\K_+}(g),P_{\K_+^\circ}(-g))\leq 0
\end{align*}
for all $g\in D(\b)$.
\end{definition}

As a corollary of the following result by Ouhabaz (see \cite{Ouh96}, Thm. 2.1 and
Proposition 2.3, and \cite{Ouh99}, Theorem 3), a form satisfies the first Beurling-Deny criterion if
and only if the associated semigroup preserves the positive cone
$\K_+$. Note that in the following proposition, unlike in the rest of this section, the Hilbert space $\H$ may be real or complex.

\begin{proposition}[Ouhabaz]\label{invariance_ouhabaz}
Let $\H$ be a Hilbert space, $C$ a closed, convex subset of $\H$,
$P$ the projection onto $C$, $(Q_t)$ a semigroup on $\H$ with
generator $T$ and $q$ the associated form with lower bound
$-\lambda$. Then the following are equivalent:
\begin{itemize}
\item[(i)]$Q_t C\subset C$ for all $t\geq 0$
\item[(ii)]$\alpha(T+\alpha)^{-1}C\subset C$ for all $\alpha>\lambda$
\item[(iii)]$P(D(q))\subset D(q)$ and $\Re q(Pu,u-P u)\geq 0$ for all $u\in D(q)$
\end{itemize}
\end{proposition}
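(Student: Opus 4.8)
This is Ouhabaz's characterization of invariance of a closed convex set under a semigroup. The plan is to prove the cycle of implications $(i) \Rightarrow (ii) \Rightarrow (iii) \Rightarrow (i)$, exploiting the standard interplay between a semigroup, its resolvent, and its associated form. The whole argument rests on three classical facts that I would treat as available background: first, that the projection $P$ onto a closed convex set $C$ is characterized by $\Re\langle x - Px, y - Px\rangle \leq 0$ for all $y \in C$ (the variational inequality, analogous to the third item of Remark~\ref{simple-properties}); second, the resolvent formula $\alpha(T+\alpha)^{-1} = \alpha\int_0^\infty e^{-\alpha t} Q_t\,dt$ together with the relation $q(u,v) = \lim_{\alpha\to\infty}\alpha\langle u - \alpha(T+\alpha)^{-1}u, v\rangle$ expressing the form through the Yosida approximants; and third, the fact that $\alpha(T+\alpha)^{-1}u \to u$ in the form norm as $\alpha \to \infty$ for $u \in D(q)$.

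\textbf{The implication $(i) \Rightarrow (ii)$.}
Assume $Q_t C \subset C$ for all $t \geq 0$. I would write the resolvent as the Laplace transform
\begin{equation*}
\alpha(T+\alpha)^{-1}x = \int_0^\infty \alpha e^{-\alpha t} Q_t x\,dt,
\end{equation*}
valid for $\alpha > \lambda$, and observe that $\int_0^\infty \alpha e^{-\alpha t}\,dt = 1$. For $x \in C$ the integrand takes values in the convex set $C$, and the integral is a limit of convex combinations (Riemann sums) of such values; since $C$ is closed and convex it is stable under such integral means, so $\alpha(T+\alpha)^{-1}x \in C$.

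\textbf{The implication $(ii) \Rightarrow (iii)$.}
Write $R_\alpha = \alpha(T+\alpha)^{-1}$ and fix $u \in D(q)$. Applying the variational characterization of $P$ with $x = R_\alpha u$ and $y = R_\alpha Pu \in C$ (which lies in $C$ by $(ii)$), and also with $y = Pu$, I would extract the two inequalities needed to control $\Re q(Pu, u - Pu)$. The key computation is to test the inequality $\Re\langle R_\alpha u - P(R_\alpha u),\, R_\alpha Pu - P(R_\alpha u)\rangle \leq 0$ and let $\alpha \to \infty$. Since $R_\alpha u \to u$ and $R_\alpha Pu \to Pu$ in norm, and since $P$ is a contraction, one shows $P(R_\alpha u) \to Pu$; then $\alpha\langle u - R_\alpha u, v\rangle \to q(u,v)$ recovers the form. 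Combining these carefully yields both $Pu \in D(q)$ and $\Re q(Pu, u - Pu) \geq 0$. This step is the main obstacle: one must first establish that $Pu \in D(q)$ (using that $R_\alpha Pu$ is bounded in form norm, hence has a weakly convergent subnet whose limit must be $Pu$), and only then manipulate the forms, being careful that all limits are taken in the correct topology.

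\textbf{The implication $(iii) \Rightarrow (i)$.}
This is the converse and is proved by a differential-inequality / monotonicity argument. Fix $x \in C$ and set $u(t) = Q_t x$; I would show that the distance function $t \mapsto \tfrac12\norm{u(t) - P u(t)}^2$ is nonincreasing, so that if it starts at $0$ (as it does, since $x \in C$) it stays at $0$, forcing $u(t) \in C$. Concretely, one computes $\frac{d}{dt}\tfrac12\norm{u(t) - Pu(t)}^2 = \Re\langle u'(t), u(t) - Pu(t)\rangle = -\Re q(u(t), u(t) - Pu(t))$, and using the variational inequality $\Re\langle u - Pu, Pu - Pu\rangle \le 0$ together with $(iii)$ applied at $u(t)$, one bounds this derivative by $-\Re q(u(t) - Pu(t),\, u(t) - Pu(t)) \leq \lambda\norm{u(t) - Pu(t)}^2$, whence Gronwall's inequality gives the claim. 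The delicate point here is differentiability of $t \mapsto Pu(t)$ and justifying the chain rule for the distance-to-$C$ functional, which I would handle by approximating with the resolvents or by invoking that the derivative of $\tfrac12 d(\cdot, C)^2$ at a point $x$ equals $x - Px$ regardless of differentiability of $P$ itself.
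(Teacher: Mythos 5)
The first thing to say is that the paper offers no proof to compare against: Proposition \ref{invariance_ouhabaz} is quoted as a known result of Ouhabaz, with pointers to \cite{Ouh96} (Thm.~2.1, Prop.~2.3) and \cite{Ouh99} (Thm.~3). Measured against the standard proof in those references, your architecture (i)$\Rightarrow$(ii)$\Rightarrow$(iii)$\Rightarrow$(i) is the right one. The step (i)$\Rightarrow$(ii) (a closed convex set is stable under the probability average $\int_0^\infty\alpha e^{-\alpha t}Q_tx\,dt$) is fine. Your Gronwall argument for (iii)$\Rightarrow$(i) also works, granted the analyticity of the semigroup so that $Q_tx\in D(T)\subset D(q)$ for $t>0$ (needed both to differentiate $\phi(t)=\tfrac12\norm{Q_tx-PQ_tx}^2$ and to apply (iii) at $u(t)$), together with the decomposition $\Re q(u,u-Pu)=\Re q(Pu,u-Pu)+\Re q(u-Pu)\ge-\lambda\norm{u-Pu}^2$. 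Be aware, though, that the classical route here is the purely algebraic (iii)$\Rightarrow$(ii): for $x\in C$ and $y=\alpha(T+\alpha)^{-1}x\in D(T)$ one computes $\alpha\norm{y-Py}^2=\alpha\Re\langle x-Py,y-Py\rangle-\Re q(y,y-Py)\le\lambda\norm{y-Py}^2$, forcing $y=Py$ for $\alpha>\lambda$; this avoids every differentiability issue, at the cost of invoking the exponential formula for (ii)$\Rightarrow$(i).

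The genuine gap sits exactly where you flag ``the main obstacle,'' in (ii)$\Rightarrow$(iii). You propose to get $Pu\in D(q)$ ``using that $R_\alpha Pu$ is bounded in form norm.'' But for a closed form, $\sup_\alpha q(R_\alpha v)<\infty$ is \emph{equivalent} to $v\in D(q)$ (in the symmetric case this is immediate from the spectral theorem and monotone convergence), so that boundedness is not an available tool: it is precisely the assertion to be proved, and invoking it is circular. Moreover, the inequality you choose to test, based at $R_\alpha u$ with the extra projection $P(R_\alpha u)$, does not visibly produce the needed estimate. The repair is to keep the base point $u$ fixed and use only the competitor $y=R_\alpha Pu\in C$ supplied by (ii): the variational inequality gives $\alpha\Re\langle Pu-R_\alpha Pu,\,u-Pu\rangle\ge0$, hence $q_\alpha(Pu):=\alpha\Re\langle Pu-R_\alpha Pu,Pu\rangle\le\alpha\Re\langle Pu-R_\alpha Pu,u\rangle$, and Cauchy--Schwarz for the (suitably $\lambda$-shifted, nonnegative) approximating forms bounds the right-hand side by $Cq_\alpha(Pu)^{1/2}q_\alpha(u)^{1/2}$. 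This bootstrap yields $q_\alpha(Pu)\le C^2q_\alpha(u)$, uniformly bounded since $u\in D(q)$, whence $Pu\in D(q)$; afterwards $\Re q(Pu,u-Pu)=\lim_\alpha\alpha\Re\langle Pu-R_\alpha Pu,u-Pu\rangle\ge0$ follows from the same tested inequality. With that substitution your outline closes into a complete proof.
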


\begin{corollary}[Characterization of positivity preserving semigroups] \label{char-pos-form-via-cone}
Let $\K$ be a Hilbert space and $\K_+\subset\K$ a positive cone. Let
$\b$ be a closed form in $\K$ and $B$ the associated self-adjoint operator. Then $\b$ satisfies the first Beurling-Deny criterion if and only if $e^{-t B}$ leaves $\K_+$ invariant for all
$t\geq 0$.
\end{corollary}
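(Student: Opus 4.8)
The plan is to deduce Corollary \ref{char-pos-form-via-cone} from Ouhabaz's Proposition \ref{invariance_ouhabaz} by specializing the closed convex set $C$ to the positive cone $\K_+$ and matching up the two invariance conditions. Since $\b$ is a closed, densely defined, lower bounded quadratic form on the real Hilbert space $\K$, it has an associated self-adjoint operator $B$ (bounded below) and the semigroup $Q_t=e^{-tB}$. The cone $\K_+$ is in particular a closed convex subset of $\K$, so Proposition \ref{invariance_ouhabaz} applies with $C=\K_+$, $P=P_{\K_+}$, $T=B$, $q=\b$, and some lower bound $-\lambda$. The statement ``$e^{-tB}$ leaves $\K_+$ invariant for all $t\ge 0$'' is precisely condition (i) of the proposition, so it remains only to show that condition (iii) there is equivalent to the first Beurling-Deny criterion as defined above.

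First I would unwind condition (iii). It asks that $P_{\K_+}(D(\b))\subset D(\b)$ and $\Re\,\b(P_{\K_+}u,\,u-P_{\K_+}u)\ge 0$ for all $u\in D(\b)$. Because we are working over $\IR$ the form is real-valued and the $\Re$ is vacuous. The containment $P_{\K_+}(D(\b))\subset D(\b)$ is literally the first half of the first Beurling-Deny criterion. For the inequality, the key step is to rewrite $u-P_{\K_+}(u)$ using Moreau's Theorem \ref{Moreau}: for any $g\in\K$ we have $g=P_{\K_+}(g)-P_{\K_+^\circ}(-g)$, hence $g-P_{\K_+}(g)=-P_{\K_+^\circ}(-g)$. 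Substituting $u=g$ gives
\begin{align*}
\b\bigl(P_{\K_+}(u),\,u-P_{\K_+}(u)\bigr)=\b\bigl(P_{\K_+}(u),\,-P_{\K_+^\circ}(-u)\bigr)=-\b\bigl(P_{\K_+}(u),\,P_{\K_+^\circ}(-u)\bigr).
\end{align*}
Therefore $\b(P_{\K_+}u,\,u-P_{\K_+}u)\ge 0$ holds for all $u\in D(\b)$ if and only if $\b(P_{\K_+}g,\,P_{\K_+^\circ}(-g))\le 0$ holds for all $g\in D(\b)$, which is exactly the displayed inequality in the definition of the first Beurling-Deny criterion. This establishes the equivalence of (iii) with the Beurling-Deny condition, and chaining it with the equivalence (i)$\Leftrightarrow$(iii) of Proposition \ref{invariance_ouhabaz} completes the argument.

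I expect the only genuine point requiring care to be the applicability check for Ouhabaz's proposition together with the bookkeeping of the Moreau decomposition. One must verify that $\K_+$, being a positive cone, is indeed closed and convex: closedness is part of the definition, and convexity follows from (P1) and (P2). One should also confirm that the element $P_{\K_+^\circ}(-g)$ appearing after the Moreau substitution lies in the form domain whenever $g$ does, so that $\b(P_{\K_+}g,\,P_{\K_+^\circ}(-g))$ is well defined; this is automatic once $P_{\K_+}(D(\b))\subset D(\b)$ is granted, since then both $P_{\K_+}(g)$ and $g$ lie in $D(\b)$ and hence so does their difference $-P_{\K_+^\circ}(-g)$. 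No serious obstacle arises beyond these routine verifications, since the substance of the result is carried entirely by Ouhabaz's proposition and Moreau's theorem, both available to us.
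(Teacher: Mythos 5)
Your argument is correct and is precisely the route the paper intends: the corollary is stated as an immediate consequence of Proposition \ref{invariance_ouhabaz} applied with $C=\K_+$, and your identification of condition (iii) with the first Beurling--Deny criterion via Moreau's identity $u-P_{\K_+}(u)=-P_{\K_+^\circ}(-u)$ is exactly the bookkeeping the paper leaves implicit. The only detail worth retaining in writing is the one you already flagged, namely that $P_{\K_+^\circ}(-g)=P_{\K_+}(g)-g\in D(\b)$ once $P_{\K_+}(D(\b))\subset D(\b)$, so the bilinear expression is well defined.
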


 The next lemma gives another characterization of forms satisfying the first Beurling-Deny criterion.
We omit the proof since it is well-known for $L^2$-spaces and easily
carries over to our more abstract setting.

\begin{lemma}\label{positive_form_Riesz}
Let $\K$ be a Hilbert space, $\K_+\subset \K$ a self-dual isotone
projection cone, and $\b$ a form in $\K$. Then $\b$ satisfies the first Beurling-Deny criterion if and only if $\abs{D(\b)}\subset D(\b)$ and
$\b(\abs{g})\leq \b(g)$ for all $g\in D(\b)$.
\end{lemma}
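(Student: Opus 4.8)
The plan is to reduce the first Beurling--Deny criterion to statements about positive and negative parts, exploiting that $\K_+$ is a self-dual isotone projection cone. First I would invoke Theorem \ref{isotone_latticial} to conclude that $(\K,\leq)$ is a Riesz space, so that for every $g\in\K$ the positive and negative parts $g_+,g_-$ exist; by Lemma \ref{projection_Riesz} they coincide with the projections, $g_+=P_{\K_+}(g)$ and $g_-=P_{\K_+}(-g)$. Since $\K_+$ is self-dual we have $\K_+^\circ=\K_+$ and hence $P_{\K_+^\circ}(-g)=P_{\K_+}(-g)=g_-$. Thus the first Beurling--Deny criterion is literally the pair of conditions $P_{\K_+}D(\b)\subset D(\b)$ and $\b(g_+,g_-)\leq 0$ for all $g\in D(\b)$, and the task becomes matching these with $\abs{D(\b)}\subset D(\b)$ and $\b(\abs{g})\leq\b(g)$.

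For the domain part I would use that $D(\b)$ is a linear subspace together with the algebraic identities $\abs{g}=g_++g_-$ and $g_\pm=\tfrac12(\abs{g}\pm g)$. If $P_{\K_+}D(\b)\subset D(\b)$, then for $g\in D(\b)$ both $g_+=P_{\K_+}(g)$ and $g_-=P_{\K_+}(-g)$ lie in $D(\b)$ (using $-g\in D(\b)$), whence $\abs{g}=g_++g_-\in D(\b)$. Conversely, if $\abs{D(\b)}\subset D(\b)$, then $P_{\K_+}(g)=g_+=\tfrac12(g+\abs{g})\in D(\b)$. This yields the equivalence of the two domain conditions.

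For the inequality I would pass from the bilinear expression to the quadratic one by polarization of the symmetric form $\b$. Setting $u=g_+$ and $v=g_-$, so that $u+v=\abs{g}$ and $u-v=g$, the identity $\b(u,v)=\tfrac14\bigl(\b(u+v)-\b(u-v)\bigr)$ gives
\begin{align*}
\b(g_+,g_-)=\tfrac14\bigl(\b(\abs{g})-\b(g)\bigr).
\end{align*}
Hence $\b(g_+,g_-)\leq 0$ for all $g\in D(\b)$ if and only if $\b(\abs{g})\leq\b(g)$ for all $g\in D(\b)$, which closes the argument.

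Given the preparatory results already available, I expect no deep obstacle; the only points requiring care are the systematic replacement of the projections by $g_\pm$ (legitimate precisely because of the self-dual isotone projection cone hypothesis, via Theorem \ref{isotone_latticial} and Lemma \ref{projection_Riesz}) and the use of symmetry of $\b$ in the polarization step. This also explains why the statement is "well-known for $L^2$-spaces": there $g_\pm$ are the pointwise positive and negative parts, and the computation is the classical manipulation of the Beurling--Deny condition. The only content of the lemma is that this manipulation survives in the abstract ordered-Hilbert-space setting once the lattice structure provided by Theorem \ref{isotone_latticial} is in place.
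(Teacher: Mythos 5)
Your argument is correct and is precisely the ``well-known'' computation the paper alludes to when it omits the proof: Theorem \ref{isotone_latticial} and Lemma \ref{projection_Riesz} identify $P_{\K_+}(g)$ and $P_{\K_+^\circ}(-g)$ with $g_+$ and $g_-$, the linear identities $\abs{g}=g_++g_-$, $g_\pm=\tfrac12(\abs{g}\pm g)$ give the equivalence of the domain conditions, and polarization gives $\b(g_+,g_-)=\tfrac14\bigl(\b(\abs{g})-\b(g)\bigr)$. Nothing further is needed.
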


Some basic properties of forms satisfying the first Beurling-Deny
criterion and their domains are collected in the following lemma.

\begin{lemma}\label{pos_form_lattice}
Let $\K$ be a Hilbert space, $\K_+\subset\K$ a self-dual isotone
projection cone, and $\b$ a form in $\K$ satisfying the first Beurling-Deny criterion with lower
bound $- \lambda\in\IR$.

\begin{itemize}
\item[(a)] The form domain $D(\b)$ is a sublattice of $\K$.
\item[(b)] For any $\alpha\geq \lambda$, the form $\b_ \alpha:=\b+
\alpha\langle\cdot,\cdot\rangle$ satisfies
\begin{align*}
\b_ \alpha(g\wedge h),\b_ \alpha(g\vee h)\leq \b_ \alpha(g)+\b_ \alpha(h)
\end{align*}
for all $g,h\in D(\b)$.
\end{itemize}
\end{lemma}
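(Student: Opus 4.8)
The plan is to prove both parts by reducing everything to the pointwise-type inequality for the lattice operations, using the fact that on a self-dual isotone projection cone we have $g_+ = P_{\K_+}(g)$ and $g_- = P_{\K_+}(-g)$ (Lemma~\ref{projection_Riesz}), and that the first Beurling-Deny criterion is equivalent to $\abs{D(\b)}\subset D(\b)$ together with $\b(\abs g)\le\b(g)$ (Lemma~\ref{positive_form_Riesz}).

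For part (a) I would first show $D(\b)$ is stable under taking positive parts. Given $g\in D(\b)$, the criterion (in the form given by Lemma~\ref{positive_form_Riesz}) guarantees $\abs g\in D(\b)$, and since $D(\b)$ is a linear subspace, $g_+ = \frac12(g+\abs g)$ and $g_- = \frac12(\abs g - g)$ both lie in $D(\b)$. To get closure under $\wedge$ and $\vee$ for arbitrary $g,h\in D(\b)$, I would apply this to the element $g-h\in D(\b)$: using the formulas $g\vee h = \frac12(g+h+\abs{g-h})$ and $g\wedge h=\frac12(g+h-\abs{g-h})$ (from the remark after Definition~\ref{Riesz-space}), both expressions are linear combinations of $g$, $h$ and $\abs{g-h}$, all of which lie in $D(\b)$. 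Hence $D(\b)$ is a sublattice.

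For part (b) the key identity is polarization applied to the Beurling–Deny inequality. Working with $\b_\alpha$ for $\alpha\ge\lambda$ makes $\b_\alpha$ nonnegative, which is what lets the cross terms be controlled. The natural route is: for $u:=g-h\in D(\b)$, the criterion gives $\b_\alpha(u_+, u_-)\le 0$ (the potential term $\alpha\langle u_+,u_-\rangle$ vanishes since $\langle u_+,u_-\rangle=\langle P_{\K_+}(u),P_{\K_+}(-u)\rangle=0$ by Moreau's Theorem~\ref{Moreau}, so the sign is inherited from $\b$). I would then expand $\b_\alpha(g\vee h)+\b_\alpha(g\wedge h)$ using $g\vee h = h + u_+$ and $g\wedge h = h - u_-$, together with the bilinearity of $\b_\alpha$, and compare with $\b_\alpha(g)+\b_\alpha(h) = \b_\alpha(h+u)+\b_\alpha(h)$. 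The difference should reduce, after cancellation, to a multiple of the single cross term $\b_\alpha(u_+,u_-)$, whose nonpositivity yields $\b_\alpha(g\vee h)+\b_\alpha(g\wedge h)\le\b_\alpha(g)+\b_\alpha(h)$.

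To conclude the individual bounds $\b_\alpha(g\wedge h)\le\b_\alpha(g)+\b_\alpha(h)$ and likewise for $g\vee h$, I would use nonnegativity of $\b_\alpha$: since $\b_\alpha(g\vee h)\ge0$ and $\b_\alpha(g\wedge h)\ge0$, each term is dominated by the sum, so each is at most $\b_\alpha(g)+\b_\alpha(h)$. The main obstacle will be the careful bookkeeping in the polarization step of part (b): one must track which cross terms survive and verify that the surviving quantity is exactly $\b_\alpha(u_+,u_-)$ (up to a harmless positive constant) rather than a combination of signs one cannot control. Getting the decomposition $g\vee h = h+u_+$, $g\wedge h = h-u_-$ right, and using the orthogonality $\langle u_+,u_-\rangle=0$ to kill the potential part, are the crucial checks that make the cancellation clean.
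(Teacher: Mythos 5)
Your proof is correct. Part (a) is exactly the paper's argument: Lemma \ref{positive_form_Riesz} gives $\abs{g-h}\in D(\b)$ and the lattice operations are linear combinations of $g$, $h$, $\abs{g-h}$. For part (b) you take a genuinely different route. The paper expands $\b_\alpha(g\wedge h)=\tfrac14\b_\alpha(g+h-\abs{g-h})$ and controls the cross term with the generic bound $-2\b_\alpha(u,v)\le\b_\alpha(u)+\b_\alpha(v)$ (from $\b_\alpha(u+v)\ge0$), then uses $\b_\alpha(\abs{g-h})\le\b_\alpha(g-h)$ and the parallelogram identity. You instead use the decomposition $g\vee h=h+u_+$, $g\wedge h=h-u_-$ with $u=g-h$, and your cancellation indeed comes out exactly right: $\b_\alpha(g\vee h)+\b_\alpha(g\wedge h)-\b_\alpha(g)-\b_\alpha(h)=2\b_\alpha(u_+,u_-)\le0$, the sign coming from the Beurling--Deny criterion for $\b_\alpha$ (the $\alpha$-term vanishing by Moreau, and $u_\pm=P_{\K_+}(\pm u)$ by Lemma \ref{projection_Riesz}). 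Your approach buys a sharper intermediate result --- the submodularity inequality $\b_\alpha(g\vee h)+\b_\alpha(g\wedge h)\le\b_\alpha(g)+\b_\alpha(h)$, an exact identity plus one signed cross term --- from which the stated bounds follow by nonnegativity of $\b_\alpha$; the paper's argument is lossier (it stacks two inequalities per term) but bounds each of $\b_\alpha(g\wedge h)$ and $\b_\alpha(g\vee h)$ directly without passing through the sum. Both rest on the same three ingredients: nonnegativity of $\b_\alpha$, the first Beurling--Deny criterion, and the lattice identities.
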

\begin{proof}
Let $g,h\in D(\b)$. By Lemma \ref{positive_form_Riesz} we have
\begin{align*}
g\wedge h=\frac 1 2(g+h-\abs{g-h})\in D(\b)
\end{align*}
and analogously for $g\vee h$. Hence, $D(\b)$ is a sublattice of $\K$.

Since $\langle P_{\K_+}(g),P_{\K_+}(-g)\rangle=0$ by Moreau's Theorem \ref{Moreau}, the form $b_ \alpha$ satisfies the first Beurling-Deny criterion. Moreover, $\b_ \alpha(u+v)\geq 0$ implies $-2\b_ \alpha(u,v)\leq \b_ \alpha(u)+\b_ \alpha(v)$ for all $u,v\in D(\b)$. With the aid of this inequality, the positivity of $\b_ \alpha$ and the parallelogram identity we obtain
\begin{align*}
\b_ \alpha(g\wedge h)&=\frac 1 4\b_ \alpha(g+h-\abs{g-h})\\
&=\frac 1 4(\b_\alpha(g+h)+\b_\alpha\abs{g-h})-2\b_\alpha(g+h,\abs{g-h}))\\
&\leq \frac 1 2(\b_\alpha(g+h)+\b_\alpha(\abs{g-h})\\
&\leq\frac 1 2(\b_\alpha(g+h)+\b_\alpha(g-h))\\
&=\b_\alpha(g)+\b_\alpha(h).
\end{align*}
The result for $\b_\alpha(g\vee h)$ follows similarly.
\end{proof}

\begin{remark}[Real versus complex Hilbert spaces]
So far we developed a theory developed for real Hilbert spaces only as
that  completely serves our purposes. To incorporate complex Hilbert spaces, one can proceed as follows:

Every complex Hilbert space $\K$ becomes a real Hilbert space
$\K_{\mathrm{r}}$ when equipped with the inner product
$\langle\cdot,\cdot\rangle_{\mathrm{r}}=\Re\langle\cdot,\cdot\rangle$
and a positive cone $\K_+$ in $\K$ is also a positive cone in
$\K_{\mathrm{r}}$. However, self-duality of $\K_+$ is not preserved,
but $\K_{\mathrm{r}}$ decomposes as
\begin{align*}
\K_{\mathrm{r}}=\K^J\oplus i\K^J
\end{align*}
with $\K^J=\K_+-\K_+$, and $\K_+$ is a self-dual cone in $\K^J$. Therefore, every element $g\in \K_+$ has a unique decomposition as
\begin{align*}
g=g_1-g_2+i(g_3-g_4)
\end{align*}
with $g_1,\dots,g_4\in\K_+$ and $\langle g_i,g_j\rangle=0$ for
$i\neq j$. This decomposition yields an anti-unitary involution $J$
via
\begin{align*}
J\colon \K^J\oplus i\K^J\lra\K^j\oplus  i\K^J,\,g+ih\mapsto g-ih.
\end{align*}
Forms satisfying the first Beurling-Deny criterion are real in the sense that $JD(\b)=D(\b)$ and
$\b(Jg)=\b(g)$. Thus, there is no loss of generality when dealing
exclusively with real Hilbert spaces as we can always restrict to
$\K^J$ in the complex case.
\end{remark}

\subsection{Absolute pairings and domination of
operators}\label{Symmetrization}
 In this section we introduce the
concept of domination of operators. As mentioned already this
concept allows one to compare two operators. These operators may act
on different Hilbert spaces provided that there is a modulus type
map between these Hilbert spaces. Such a map is called
absolute pairing.

\smallskip

Throughout this section $\K$  denotes a real Hilbert space and $\H$
a Hilbert space, either real or complex.

\medskip

\begin{definition}[Absolute mapping, absolute pairing]
Let $\K_+\subset \K$ be a positive cone. A map $S\colon \H\lra\K_+$ is called absolute mapping if
\begin{itemize}
\item[(S1)]$\abs{\langle f_1,f_2\rangle}\leq\langle S(f_1),S(f_2)\rangle$ for all $f_1,f_2\in\H$ with equality if $f_1=f_2$.
\end{itemize}
An absolute mapping $S$ is called absolute pairing (or symmetrization) if
\begin{itemize}
\item[(S2)]
For all $g\in\K_+$ and $f_1\in\H$ there exists $f_2\in\H$ such that
$g=S(f_2)$ and
\begin{align*}
\langle f_1,f_2\rangle=\langle S(f_1),S(f_2)\rangle
\end{align*}
In this case $f_1$ and $f_2$ are called paired.
\end{itemize}
\end{definition}

\begin{remark}
 By its very definition every absolute pairing is surjective.
\end{remark}

If one  thinks of an absolute mapping $S$  as a form of modulus,
then one may think of $f_2$ appearing in (S2) of the preceding
definition as a form of $g\sgn (f_1)$ (see the examples
below for further justification of this point of view). In this
sense an absolute pairing is a modulus type map together with the
possibility of forming a signum.

\smallskip

The following lemma shows that we have already encountered a natural
example of an absolute mapping in the last section.
\begin{lemma}\label{absolute_mapping_Riesz}
If $\K_+\subset\K$ is a self-dual positive cone, then
\begin{align*}
S\colon\K\lra\K_+,\,g\mapsto P_{\K_+}(g)+P_{\K_+}(-g)
\end{align*}
is an absolute mapping. If $\K_+$ is an isotone projection cone, then $S=\abs{\cdot}$ is an absolute pairing.
\end{lemma}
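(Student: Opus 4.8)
The plan is to handle the two assertions separately, using Moreau's Theorem \ref{Moreau} for the absolute-mapping part and the lattice representation of Remark \ref{lattice_L2} for the absolute-pairing part. For the first claim I would first note that $S$ indeed takes values in $\K_+$, since $P_{\K_+}(g)$ and $P_{\K_+}(-g)$ both lie in $\K_+$ and (P1) closes $\K_+$ under addition. Writing $a_\pm:=P_{\K_+}(\pm f_1)$ and $c_\pm:=P_{\K_+}(\pm f_2)$ and using that self-duality gives $\K_+^\circ=\K_+$, Moreau's Theorem \ref{Moreau} yields $f_1=a_+-a_-$, $f_2=c_+-c_-$ with all parts in $\K_+$, together with $S(f_1)=a_++a_-$ and $S(f_2)=c_++c_-$. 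To verify (S1) I would expand
\[
\langle f_1,f_2\rangle=(\langle a_+,c_+\rangle+\langle a_-,c_-\rangle)-(\langle a_+,c_-\rangle+\langle a_-,c_+\rangle)
\]
and
\[
\langle S(f_1),S(f_2)\rangle=\langle a_+,c_+\rangle+\langle a_-,c_-\rangle+\langle a_+,c_-\rangle+\langle a_-,c_+\rangle .
\]
All four summands are nonnegative by (P3), so $\langle f_1,f_2\rangle$ is a difference $P-N$ of two nonnegative numbers while $\langle S(f_1),S(f_2)\rangle=P+N$, giving $\abs{\langle f_1,f_2\rangle}\leq P+N=\langle S(f_1),S(f_2)\rangle$ immediately.

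The equality case of (S1) for $f_1=f_2$ follows from orthogonality: since $\langle a_+,a_-\rangle=0$ by Moreau's Theorem \ref{Moreau}, both $\langle f_1,f_1\rangle$ and $\langle S(f_1),S(f_1)\rangle$ reduce to $\norm{a_+}^2+\norm{a_-}^2$. This finishes the first part.

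For the second assertion I would first identify $S$ with the absolute value. A self-dual isotone projection cone makes $(\K,\leq)$ a Riesz space by Theorem \ref{isotone_latticial}, so Lemma \ref{projection_Riesz} gives $g_+=P_{\K_+}(g)$ and $g_-=P_{\K_+}(-g)$, whence $S(g)=g_++g_-=\abs{g}$. It remains to establish (S2): given $g\in\K_+$ and $f_1\in\K$, I must produce $f_2$ with $\abs{f_2}=g$ and $\langle f_1,f_2\rangle=\langle\abs{f_1},g\rangle$; morally $f_2=g\,\sgn(f_1)$. To give this meaning in the absence of points I would invoke Remark \ref{lattice_L2} to realize $\K$ unitarily as $L^2(X,m)$ with $\K_+$ carried to $L^2_+(X,m)$. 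There I set $f_2:=g\,\sgn(f_1)$, choosing $\sgn(f_1)=1$ on $\{f_1=0\}$ so that $\abs{\sgn(f_1)}=1$ everywhere and hence $\abs{f_2}=g$; a pointwise computation gives $f_1f_2=\abs{f_1}\,g$ $m$-almost everywhere, and integration yields $\langle f_1,f_2\rangle=\langle\abs{f_1},g\rangle$.

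The hard part will be the transport to $L^2$: I have to confirm that the unitary of Remark \ref{lattice_L2} intertwines $P_{\K_+}$ with the pointwise positive part, so that $S$ corresponds to the pointwise modulus and the defining conditions (S1) and (S2) are genuinely preserved under the identification. A secondary subtlety is the careful treatment of the null set $\{f_1=0\}$, where the signum must be assigned a value of modulus one so that $\abs{f_2}=g$ holds exactly rather than only off that set, while that set contributes nothing to $\langle f_1,f_2\rangle$.
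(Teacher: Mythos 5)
Your proposal is correct and follows essentially the same route as the paper: the first part is the same Moreau-decomposition expansion of $\langle f_1,f_2\rangle$ into four nonnegative terms (the paper cites Lemma \ref{norm_Riesz_monotone} for the equality case, which is the same orthogonality argument you give), and the second part is the same reduction to $L^2(X,m)$ via Remark \ref{lattice_L2} with $f_2=g\,\sgn(f_1)$. The intertwining step you flag as the "hard part" is in fact immediate, since a unitary carrying $\K_+$ onto $L^2_+(X,m)$ automatically conjugates $P_{\K_+}$ into $P_{L^2_+(X,m)}$.
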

\begin{proof}
Let $g,h\in\K$. Then we have
\begin{align*}
\abs{\langle g,h\rangle}&=\abs{\langle P_{\K_+}(g)-P_{\K_+}(-g),P_{\K_+}(h)-P_{\K_+}(-h)\rangle}\\
&\leq \langle P_{\K_+}(g),P_{\K_+}(h)\rangle+\langle P_{\K_+}(-g),P_{\K_+}(h)\rangle+\langle P_{\K_+}(g),P_{\K_+}(-h)\rangle\\
&\quad+\langle P_{\K_+}(-g),P_{\K_+}(-h)\rangle\\
&=\langle P_{\K_+}(g)+P_{\K_+}(-g),P_{\K_+}(h)+P_{\K_+}(-h)\rangle\\
&=\langle S(g),S(h)\rangle.
\end{align*}
Equality in the case $g=h$ was already shown in Lemma \ref{norm_Riesz_monotone}.

If $\K_+$ is an isotone projection cone, then $\K$ is order isomorphic to $L^2(X,m)$ for some measure space $m$ (see Remark \ref{lattice_L2}) and the claim follows easily (see also Example \ref{ex_sym_L2}).
\end{proof}

In a spatial setting the modulus together with a signum function
provide absolute pairings as discussed  in the next examples.

\begin{example}
The norm $\norm{\cdot}\colon\H\lra [0,\infty)$ is an absolute pairing.
For $\lambda>0$ and $f_1\in\H$ an element $f_2\in\H$ such that $f_1$
and $f_2$ are paired and $\norm{f_2}=\lambda$ is given by
$f_2=\lambda\frac{f_1}{\norm{f_1}}$ if $f_1\neq 0$, and by
$f_2=\lambda\xi$ for any $\xi\in\H$ with $\norm{\xi}=1$ if $f_1=0$.
\end{example}

\begin{example}[Direct integrals]\label{ex_sym_L2}
Let $((H_x)_{x\in X},\mathfrak{M})$ be a measurable field of Hilbert
spaces over $(X,\mathcal{B},\mu)$ in the sense of \cite{Tak02},
Definition IV.8.9, and $\H=\int_X^\oplus H_x\,d\mu(x)$. The norm on
$H_x$ is denoted by $\abs{\cdot}_x$, $x\in X$.
 Then, the
map $S\colon \H\lra L^2_+(X,\mu)$ given by
$S(\xi)(x)=\abs{\xi(x)}_x$ is an absolute pairing. Indeed, for
$\xi\in\H$ and $f\in L^2_+(X,\mu)$ let
\begin{align*}
\eta(x)=\begin{cases}\frac{ f(x)}{\abs{\xi(x)}_x}  \xi(x) &\colon \xi(x)\neq 0,\\
f(x)\zeta(x)&\colon \xi(x)=0,\end{cases}
\end{align*}
where $\eta\in\mathfrak{M}$ with $S(\zeta)=1$ (the existence of
such an element is proven in \cite{Tak02}, Lemma IV.8.12). Then
$\eta$ and $\xi$ are paired with $S(\eta)=f$. The other
properties of an absolute pairing are easy to check.

A special case of this construction is given by a constant field of
Hilbert spaces, where all  $H_x$, $x\in X$,  are equal to one fixed
separable Hilbert space $H$. In this case, $\H$ is also denoted by
$L^2 (X,\mu;H)$ and given by the vector space of all measurable
maps (with respect to the corresponding Borel-$\sigma$-algebras)
$\xi : X\longrightarrow H$ with $\int \abs{\xi(x)}_x^2 d\mu (x)
<\infty$, where two such maps are identified if they agree
$\mu$-almost everwhere.
\end{example}

\begin{example}\label{herm_vector_bundle}
Let $X$ be a topological space, $m$ a Borel measure on $X$ and $E$ a
Hermitian vector bundle over $X$ with canonical projection $\pi :
E\longrightarrow X$. Loosely speaking this means that $E$ is  a
vector bundle with an inner product on the fibers that varies
continuously with the base point. More precisely, each fiber
$\pi^{-1} (x)$, $x\in X$, carries the structure of a finite vector
space with an inner product $\langle \cdot,\cdot\rangle_x$ and the
bundle is locally trivial with a fixed  finite dimensional Hilbert
space $(H,\langle\cdot,\cdot\rangle)$ as model, that is, to each
point $p\in X$ there exists a neighborhood $U$ and a homeomorphism
$\varphi : U\times H\longrightarrow \pi^{-1} (U)$, called local
trivialization, such that $\varphi_x :=\varphi (x,\cdot)$ is an an
isometric isomorphism between the inner product space $H$ and
$\pi^{-1}(x)$ for each $x\in U$.  A function $\eta :
X\longrightarrow E$ with $\pi \circ \xi = \mathrm{id}_X$ is called a
section. By $L^2 (X,\mu;E)$ we denote the set of measurable (with
respect to the corresponding Borel-$\sigma$-algebras) sections $\xi$
with $\int_X \abs{\xi(x)}_x^2   dm (x) < \infty$, where
$\abs{\cdot}_x$ is the norm induced from $\langle
\cdot,\cdot\rangle_x$ and sections are identified which agree
$\mu$-almost everywhere. This space is a Hilbert space. Assume  that
the bundle admits one nowhere vanishing measurable section $\xi$.
Then
\begin{align*}
S\colon L^2(X,m;E)\lra L^2_+(X,m),\,(Sf)(x)=\abs{f(x)}_x
\end{align*}
is an absolute pairing by the same arguments as in the previous
example. The assumption that the bundle admits a nowhere vanishing
section can easily be seen to be met if the underlying space is
$\sigma$-compact or a separable metric space. More generally, it
suffices that the underlying space satisfies the Lindelöf property
that every cover has a countable subcover.
\end{example}

\begin{example}
For a Borel subset $A$ of $\IR^n$ denote by $A^\ast$ the ball in $\IR^n$ with the same volume as $A$. For $f\in L^2(\IR^n)$ and $r\geq 0$ define
\begin{align*}
f^\ast(r)=\int_0^\infty 1_{(f^{-1}(t,\infty))^\ast}(x)\,dt,
\end{align*}
where $x\in \IR^n$ with $\abs{x}=r$ (obviously, $f^\ast(r)$ does not depend on the choice of $x$). The function $f^\ast$ is called the decreasing rearrangement of $f$.

Now let $\H=L^2(\IR^n)$ and $\K=L^2([0,\infty),r\,dr)$ with the positive cone $\K_+=\{f\in \K\mid f\geq 0,\,f'\leq 0\}$ from Example \ref{ex_cone_decreasing_fcts}. Then
\begin{align*}
S\colon \H\lra \K_+,\,f\mapsto f^\ast
\end{align*}
is an absolute pairing (see \cite{Ber}, Theorem 25 of Appendix A).
\end{example}

In the following lemma we collect some basic properties of the
modulus that carry over to abstract absolute pairings. See \cite{HSU},
Proposition 2.6, for a proof.

\begin{lemma}\label{symmetrization_triangle}
Let $\K_+\subset\K$ be a positive cone and $S\colon\H\lra\K_+$ an absolute pairing.
\begin{itemize}
\item[(a)]The triangle inequality
\begin{align*}
\langle S(f_1+f_2),g\rangle\leq \langle S(f_1)+S(f_2),g\rangle
\end{align*}
holds for all $f_1,f_2\in\H$ and $g\in\K_+$.
\item[(b)]The map $S$ is positive homogeneous, i.e.
\begin{align*}
S(\alpha f)=\abs{\alpha} S(f)
\end{align*}
holds for all $f\in\H$ and $\alpha\in\IC$.
\item[(c)]The map $S$ is positive definite: For all $f\in\H$, $S(f)=0$ if and only if $f=0$.
\item[(d)]The map $S$ is Lipschitz continuous.
\end{itemize}
\end{lemma}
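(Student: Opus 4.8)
The plan is to extract from (S1) one clean consequence and let three of the four claims follow from it almost mechanically, reserving the pairing property (S2) for the remaining one. The key observation is the equality case of (S1): taking $f_1=f_2=f$ gives $\langle S(f),S(f)\rangle=\abs{\langle f,f\rangle}=\norm f^2$, so $S$ is norm-preserving, $\norm{S(f)}=\norm f$ for all $f\in\H$. Property (c) is then immediate: if $S(f)=0$ then $\norm f=\norm{S(f)}=0$, and conversely $\norm{S(0)}=\norm 0=0$ forces $S(0)=0$.

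For (d) I would expand the norm in $\K$ and combine (S1) with norm preservation. For $f_1,f_2\in\H$,
\begin{align*}
\norm{S(f_1)-S(f_2)}^2 &=\norm{f_1}^2+\norm{f_2}^2-2\langle S(f_1),S(f_2)\rangle\\
&\leq \norm{f_1}^2+\norm{f_2}^2-2\Re\langle f_1,f_2\rangle=\norm{f_1-f_2}^2,
\end{align*}
where the inequality uses $\langle S(f_1),S(f_2)\rangle\geq\abs{\langle f_1,f_2\rangle}\geq\Re\langle f_1,f_2\rangle$ from (S1); this in fact shows $S$ is nonexpansive. For (b) with $\alpha\neq 0$ I would pin down the cross term $\langle S(\alpha f),S(f)\rangle$ from both sides: (S1) gives $\langle S(\alpha f),S(f)\rangle\geq\abs{\langle\alpha f,f\rangle}=\abs\alpha\norm f^2$, while Cauchy--Schwarz together with norm preservation gives the reverse bound $\langle S(\alpha f),S(f)\rangle\leq\norm{S(\alpha f)}\,\norm{S(f)}=\abs\alpha\norm f^2$. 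Expanding $\norm{S(\alpha f)-\abs\alpha S(f)}^2$ and inserting these equalities (together with $\norm{S(\alpha f)}=\abs\alpha\norm f$) yields $0$, so $S(\alpha f)=\abs\alpha S(f)$; the case $\alpha=0$ is covered by (c).

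The one assertion genuinely requiring the pairing property (S2) is the triangle inequality (a), and this is where the main care is needed. Given $g\in\K_+$, I would apply (S2) to $g$ and the element $f_1+f_2$ to obtain $h\in\H$ with $S(h)=g$ and $\langle f_1+f_2,h\rangle=\langle S(f_1+f_2),S(h)\rangle=\langle S(f_1+f_2),g\rangle$. Since the right-hand side lies in the real space $\K$, the number $\langle f_1+f_2,h\rangle$ is real, so it equals its own real part and
\begin{align*}
\langle S(f_1+f_2),g\rangle&=\Re\langle f_1,h\rangle+\Re\langle f_2,h\rangle\\
&\leq\abs{\langle f_1,h\rangle}+\abs{\langle f_2,h\rangle}\\
&\leq\langle S(f_1),S(h)\rangle+\langle S(f_2),S(h)\rangle=\langle S(f_1)+S(f_2),g\rangle,
\end{align*}
the last step using (S1). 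The subtle point, and the step I expect to be the main obstacle, is precisely the passage to real parts: in the complex case the individual pairings $\langle f_i,h\rangle$ need not be real, so one must first exploit that their sum is real before estimating each summand by its modulus.
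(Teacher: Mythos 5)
Your proof is correct. The paper itself omits the argument and refers to \cite{HSU}, Proposition 2.6; your derivation (norm preservation from the equality case of (S1), the nonexpansiveness computation, the two-sided bound on $\langle S(\alpha f),S(f)\rangle$, and the use of (S2) plus realness of the inner product in $\K$ for the triangle inequality) is essentially that standard argument.
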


To a certain extent absolute pairings are compatible with taking
differences (as it the case for the usual modulus).

\begin{lemma}\label{abs_difference}
Let $\K_+\subset\K$ be a positive cone and $S\colon \H\lra\K_+$ an absolute pairing. If $f_1,f_2\in\H$ are paired and satisfy $S(f_2)\leq S(f_1)$, then $S(f_1-f_2)=S(f_1)-S(f_2)$ and $f_1-f_2$ and
$f_2$ are paired.
\end{lemma}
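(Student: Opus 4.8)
The plan is to first establish the identity $S(f_1-f_2)=S(f_1)-S(f_2)$ and then obtain the pairing of $f_1-f_2$ and $f_2$ as a short consequence. Because there are no points in our setting, I cannot argue pointwise; instead I would compare the two candidate elements $u:=S(f_1-f_2)$ and $v:=S(f_1)-S(f_2)$ of $\K$ by testing them against the inner product. Observe at the outset that $u\in\K_+$ by the definition of $S$ and that $v\in\K_+$ by the hypothesis $S(f_2)\leq S(f_1)$.

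The argument then rests on two ingredients. First, I would decompose $f_1=(f_1-f_2)+f_2$ and apply the triangle inequality of Lemma~\ref{symmetrization_triangle}(a), which gives $\langle v,g\rangle\leq\langle u,g\rangle$ for all $g\in\K_+$, that is, $u-v\in\K_+^\circ$. Second, I would use that $f_1$ and $f_2$ are paired together with the norm identity $\norm{f}=\norm{S(f)}$ coming from the equality case of (S1). Expanding $\norm{f_1-f_2}^2=\norm{f_1}^2-2\Re\langle f_1,f_2\rangle+\norm{f_2}^2$ and substituting $\Re\langle f_1,f_2\rangle=\langle S(f_1),S(f_2)\rangle$ together with $\norm{f_i}=\norm{S(f_i)}$ shows $\norm{f_1-f_2}=\norm{S(f_1)-S(f_2)}$, so that $\norm{u}=\norm{v}$.

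The hard part is the passage from these two facts to the equality $u=v$, as this is exactly where the pointwise intuition must be replaced by the Hilbert space geometry of the cone and its dual. The key observation is that, since $u,v\in\K_+$ and $u-v\in\K_+^\circ$, testing $u-v$ against $u$ and against $v$ yields $\norm{u}^2\geq\langle u,v\rangle\geq\norm{v}^2$; as the two outer norms coincide, all three quantities are equal, whence $\norm{u-v}^2=\norm{u}^2-2\langle u,v\rangle+\norm{v}^2=0$. This proves $S(f_1-f_2)=S(f_1)-S(f_2)$.

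Finally, I expect the pairing of $f_1-f_2$ and $f_2$ to follow by a direct computation. Using the pairing of $f_1$ and $f_2$, the identity $\norm{f_2}^2=\norm{S(f_2)}^2$ and the identity just proven, one finds
\begin{align*}
\langle f_1-f_2,f_2\rangle=\langle f_1,f_2\rangle-\norm{f_2}^2=\langle S(f_1)-S(f_2),S(f_2)\rangle=\langle S(f_1-f_2),S(f_2)\rangle,
\end{align*}
which is precisely the assertion that $f_1-f_2$ and $f_2$ are paired.
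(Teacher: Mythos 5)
Your proof is correct and takes essentially the same route as the paper: the triangle inequality yields $S(f_1-f_2)-(S(f_1)-S(f_2))\in\K_+^\circ$, the pairing hypothesis yields $\norm{S(f_1-f_2)}=\norm{S(f_1)-S(f_2)}$, a short cone-geometric argument upgrades these two facts to equality, and the concluding pairing computation is identical. The only cosmetic difference is in the middle step: the paper deduces $\norm{u-v}^2=-2\langle v,u-v\rangle\leq 0$ directly from $v\in\K_+$ and the equality of norms, whereas you sandwich $\langle u,v\rangle$ between $\norm{u}^2$ and $\norm{v}^2$ using both $u,v\in\K_+$.
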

\begin{proof}
By the Lemma \ref{symmetrization_triangle}, the triangle inequality $\langle S(f_1-f_2),h\rangle\geq \langle S(f_1)-S(f_2),h\rangle$ holds for all $h\in\K_+$. Moreover we have
\begin{align*}
\norm{S(f_1-f_2)}^2&=\norm{f_1-f_2}^2\\
&=\norm{f_1}^2+\norm{f_2}^2-2\langle f_1,f_2\rangle\\
&=\norm{S(f_1)}^2+\norm{S(f_2)}^2-2\langle S(f_1),S(f_2)\rangle\\
&=\norm{S(f_1)-S(f_2)}^2.
\end{align*}
Let $g,g'\in\K_+$ such that $\langle g,h\rangle\leq \langle g',h\rangle$ for all $h\in \K_+$ and $\norm g=\norm{g'}$. Then
we have
\begin{align*}
\norm{g-g'}^2 =\norm{g}^2+\norm{g'}^2-2\langle g,g'\rangle =2\norm{g}^2-2\langle g,g+g'-g\rangle=-2\langle g,g'-g\rangle \leq 0.
\end{align*}
Hence $g=g'$. Applying this
result to $g=S(f_1)-S(f_2)$ and $g'=S(f_1-f_2)$, we obtain the desired
equality for $S(f_2-f_1)$. Moreover,
\begin{align*}
\langle f_2-f_1,f_2\rangle=\langle S(f_2)-S(f_1),S(f_2)\rangle=\langle S(f_1-f_2),S(f_2)\rangle,
\end{align*}
hence $f_1-f_2$ and $f_2$ are paired.
\end{proof}

The next lemma will serve as a characterization of the central
concept of this section, namely domination of operators. A proof is
given in \cite{Ber}, Proposition 11 of Appendix A.
\begin{lemma}\label{abs_char_domination}
Let $\K_+\subset \K$ be a positive cone and $S\colon\H\lra\K_+$ an absolute pairing. For bounded operators $P$ (resp. $Q$) on $\H$ (resp. $\K$), the following are equivalent:
\begin{itemize}
\item[(i)]$\langle S(P f_1),g\rangle\leq \langle Q S (f_1),g\rangle$ for all $f_1\in \H,\,g\in\K_+$
\item[(ii)]$\Re\langle  Pf_1,f_2\rangle\leq \langle Q S(f_1),S(f_2)\rangle$ for all $f_1,f_2\in\H$
\item[(iii)]$\abs{\langle P f_1,f_2\rangle}\leq \langle Q S(f_1),S(f_2)\rangle$ for all $f_1,f_2\in\H$
\end{itemize}
Furthermore, if $\K_+$ is self-dual, these assertions are equivalent to
\begin{itemize}
\item[(iv)]$S(P f_1)\leq Q S(f_1)$ for all $f_1\in \H$.
\end{itemize}
\end{lemma}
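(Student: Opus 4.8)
The plan is to establish the web of equivalences $(i) \Leftrightarrow (iii) \Leftrightarrow (ii)$ among the first three statements directly from the two defining axioms (S1) and (S2) of an absolute pairing, together with the positive homogeneity of $S$, and then to treat $(iv)$ separately under the self-duality hypothesis. Since I will prove $(i)\Leftrightarrow(iii)$ and $(ii)\Leftrightarrow(iii)$, all three become mutually equivalent and nothing further among them is needed.

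I would first dispose of $(ii)\Leftrightarrow(iii)$, which only involves the complex structure on $\H$. The implication $(iii)\Rightarrow(ii)$ is immediate from $\Re z\le\abs{z}$. For the converse, given $f_1,f_2$ I would choose a unimodular $\alpha\in\IC$ so that $\langle Pf_1,\alpha f_2\rangle$ is real and nonnegative, hence equal to $\abs{\langle Pf_1,f_2\rangle}$; applying $(ii)$ to the pair $f_1,\alpha f_2$ and using $S(\alpha f_2)=\abs{\alpha}S(f_2)=S(f_2)$ from Lemma \ref{symmetrization_triangle}(b) yields $(iii)$.

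Next, for $(i)\Leftrightarrow(iii)$: the direction $(i)\Rightarrow(iii)$ follows by chaining (S1) with $(i)$ evaluated at $g=S(f_2)$, namely $\abs{\langle Pf_1,f_2\rangle}\le\langle S(Pf_1),S(f_2)\rangle\le\langle QS(f_1),S(f_2)\rangle$. The reverse $(iii)\Rightarrow(i)$ is the one place where the full strength of an absolute pairing, the axiom (S2), is needed: given $g\in\K_+$, I would invoke (S2) to produce $f_2\in\H$ paired with $Pf_1$ and satisfying $S(f_2)=g$, so that $\langle Pf_1,f_2\rangle=\langle S(Pf_1),g\rangle$ becomes a real number; then $(iii)$ gives $\langle S(Pf_1),g\rangle=\langle Pf_1,f_2\rangle\le\abs{\langle Pf_1,f_2\rangle}\le\langle QS(f_1),g\rangle$, which is exactly $(i)$.

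Finally, under self-duality I would show $(i)\Leftrightarrow(iv)$. Statement $(i)$ says precisely that $QS(f_1)-S(Pf_1)$ lies in the dual cone $\K_+^\circ$, and self-duality identifies $\K_+^\circ=\K_+$, so this is equivalent to $QS(f_1)-S(Pf_1)\in\K_+$, i.e. to $(iv)$; note that the direction $(iv)\Rightarrow(i)$ in fact uses only (P3) and not self-duality. The only genuinely delicate point in the whole argument is the use of (S2) in $(iii)\Rightarrow(i)$: one must align the ``sign'' of $f_2$ with $Pf_1$ so that the a priori complex quantity $\langle Pf_1,f_2\rangle$ collapses to the real pairing $\langle S(Pf_1),g\rangle$. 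Everything else is a direct manipulation of the defining inequalities.
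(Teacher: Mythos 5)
Your proof is correct: the chain (i)$\Leftrightarrow$(iii)$\Leftrightarrow$(ii) via (S1), positive homogeneity, and the application of (S2) to $Pf_1$ (rather than $f_1$) in (iii)$\Rightarrow$(i) --- which is indeed the one delicate point, since it makes $\langle Pf_1,f_2\rangle=\langle S(Pf_1),g\rangle$ real so that (iii) can be invoked --- together with the identification $\K_+^\circ=\K_+$ for (iv), is exactly the standard argument. The paper itself omits the proof and refers to \cite{Ber}, Proposition 11 of Appendix A, so there is no in-text proof to compare against, but your blind reconstruction is complete and sound.
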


\begin{definition}[Domination of operators]\label{def_domination}
If $P$ and $Q$ satisfy one of the equivalent assertions of Lemma
\ref{abs_char_domination}, then $P$ is said to be dominated by $Q$.
A family of bounded operators $(P_\alpha)_{\alpha\in J}$
is said to be dominated by the family of bounded operators
$(Q_\alpha)_{\alpha\in J}$ if $P_\alpha$ is dominated by $Q_\alpha$
for each  $\alpha\in J$.
\end{definition}

By its very definition the  domination of $P$ by $Q$  can be read as
the operator $Q$ entailing  properties of the operator $P$ (and this
will be our point of view in our main result below). Note, however,
that the definition also implies some structural positivity property
of $Q$ as $Q (\K_+)$ must be  a subset of  $\K_+$. We will meet
this positivity property in various places below.

\begin{example}\label{ex_positive_semigroup_dominating}
Let $\K$ be a Hilbert space, $\K_+\subset \K$ a self-dual isotone projection cone, and $P\colon \K\lra \K$ a bounded linear operator that leaves $\K_+$ invariant. Then
$P$ is dominated by itself: For all $g\in \K$, we have
\begin{align*}
\abs{Pg}=\abs{Pg_+ - Pg_-}\leq \abs{Pg_+}+\abs{Pg_-}=P g_+ +P g_-=P\abs{g}.
\end{align*}
Indeed, also the converse is true: If $P$ is dominated by itself, then $Pg=P\abs{g}\geq \abs{Pg}\geq 0$ for all $g\in \K_+$, hence $P \K_+\subset \K_+$.
\end{example}

We will give more interesting examples (in particular such that have
different operators $P$ and $Q$) once we have a characterization of
domination of semigroups in terms of the associated forms at hand.
But before we turn to this characterization, we present some basic
algebraic properties of domination. A proof is given in \cite{Ber},
Appendix A, Lemma 14.

\begin{lemma}\label{domination_sums}
Let $\K_+\subset \K$ be a positive cone, $S\colon\H\lra\K_+$ an absolute pairing, and $P_1,P_2$ (resp. $Q_1,Q_2$) bounded self-adjoint operators on $\H$ (resp. $\K$).
\begin{itemize}
\item[(a)] Let $\alpha_1,\alpha_2\in\IC$. If $P_i$ is dominated by $Q_i$, $i\in\{1,2\}$, then $\alpha_1P_1+\alpha_2P_2$ is dominated by $\abs{\alpha_1}Q_1+\abs{\alpha_2}Q_2$.
\item[(b)] If $\K_+$ is self-dual and $P_1$ is dominated by $Q_1$, then $Q_1$ preserves the cone $\K_+$.
\item [(c)] If $P_i$ is dominated by $Q_i$, $i\in\{1,2\}$, and $Q_1$
preserves $\K_+$, then $P_1P_2$ is dominated by $Q_1 Q_2$.
\end{itemize}
\end{lemma}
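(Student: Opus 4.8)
The plan is to reduce each part to one of the equivalent formulations of domination collected in Lemma \ref{abs_char_domination}, choosing for each item whichever characterization makes the manipulation transparent. Throughout I would use that the inner products are bilinear, that $S$ takes values in $\K_+$, and that $h\in\K_+$ is the same as $0\le h$.

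For part (a) I would work with characterization (iii). Fix $f_1,f_2\in\H$. The ordinary triangle inequality for the modulus on $\IC$ gives $\abs{\langle(\alpha_1 P_1+\alpha_2 P_2)f_1,f_2\rangle}\le\abs{\alpha_1}\,\abs{\langle P_1 f_1,f_2\rangle}+\abs{\alpha_2}\,\abs{\langle P_2 f_1,f_2\rangle}$. Applying the hypothesis $\abs{\langle P_i f_1,f_2\rangle}\le\langle Q_i S(f_1),S(f_2)\rangle$ to each summand and collecting terms by linearity in the first argument yields exactly $\abs{\langle(\alpha_1 P_1+\alpha_2 P_2)f_1,f_2\rangle}\le\langle(\abs{\alpha_1}Q_1+\abs{\alpha_2}Q_2)S(f_1),S(f_2)\rangle$, which is (iii) for the asserted pair. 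No self-adjointness or self-duality is needed here.

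For part (b), self-duality of $\K_+$ makes characterization (iv) available, i.e.\ $S(P_1 f_1)\le Q_1 S(f_1)$ for all $f_1$. Since $S$ is an absolute pairing it is surjective onto $\K_+$, so given $g\in\K_+$ I may pick $f_1\in\H$ with $S(f_1)=g$. As $S$ takes values in $\K_+$ we have $0\le S(P_1 f_1)\le Q_1 g$, and transitivity of $\le$ forces $0\le Q_1 g$, that is, $Q_1 g\in\K_+$. Hence $Q_1$ preserves the cone.

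For part (c) I would use characterization (i) together with the self-adjointness of $Q_1$ on the real Hilbert space $\K$. Fix $f_1\in\H$ and $g\in\K_+$. Applying the domination of $P_1$ by $Q_1$ in form (i) to the vector $P_2 f_1$ gives $\langle S(P_1 P_2 f_1),g\rangle\le\langle Q_1 S(P_2 f_1),g\rangle=\langle S(P_2 f_1),Q_1 g\rangle$. The crucial point is that $Q_1 g\in\K_+$ because $Q_1$ preserves the cone; this is precisely what legitimizes using $Q_1 g$ as a test element in the domination of $P_2$ by $Q_2$, producing $\langle S(P_2 f_1),Q_1 g\rangle\le\langle Q_2 S(f_1),Q_1 g\rangle=\langle Q_1 Q_2 S(f_1),g\rangle$. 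Chaining the two estimates gives (i) for $P_1 P_2$ and $Q_1 Q_2$. I expect this last part to be the main obstacle: domination is an inequality tested only against elements of $\K_+$, so passing it through a product forces the intermediate operator $Q_1$ to map $\K_+$ into $\K_+$ (which is exactly the stated hypothesis), and the self-adjointness of $Q_1$ is what lets one shift it back onto the test vector $g$.
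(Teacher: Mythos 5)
Your proof is correct: each of the three reductions to the characterizations in Lemma \ref{abs_char_domination} is carried out properly, and in (c) you correctly identify that the cone-preservation hypothesis on $Q_1$ is exactly what allows $Q_1g$ to serve as a test vector for the domination of $P_2$ by $Q_2$ after shifting $Q_1$ across the inner product by self-adjointness. The paper itself does not prove this lemma but defers to \cite{Ber}, Appendix A, Lemma 14; your argument is the standard one and there is nothing in the paper to contrast it with.
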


%

\section{The main new  technical ingredient}\label{new}
In this section we provide  the main tool in the characterization of
domination of semigroups in terms of the associated forms. It is
given by Proposition \ref{domination_invariance}, which is an
abstract version of Lemma 3.2 in \cite{MVV}. The proof given there
relies on pointwise considerations that are not applicable in our
setting. Instead we will have to rely on the abstract properties of
absolute mappings and isotone projection cones. This makes the proof
technically demanding (and somewhat lengthy as well).

\begin{proposition}[Domination via invariance of  $C$]\label{domination_invariance}
Let $\K_+\subset\K$ be a positive cone, $S\colon \H\lra
\K_+$ an absolute mapping, and $(P_t)$ (resp. $(Q_t))$ a semigroup
on $\H$ (resp. $\K$).

Define the semigroup $(W_t)$ on $\H\oplus\K$ by
\begin{align*}
W_t(f,g)=(P_t f,Q_t g)
\end{align*}
for $t\geq 0,\,(f,g)\in\H\oplus\K$, and let
\begin{align*}
C=\{(u,v)\in\H\oplus\K\mid v-S(u)\in \K_+^\circ\}.
\end{align*}
\begin{itemize}
\item[(a)]The set $C$ is a closed, convex subset of $\H\oplus \K$.
\item[(b)]If $C$ is invariant under $(W_t)$, then $(P_t)$ is dominated by $(Q_t)$. Conversely, if $(P_t)$ is dominated by $(Q_t)$ and $(Q_t)$ leaves $\K_+$ invariant, then $C$ is invariant under $(W_t)$.
\item[(c)]Let $g\in \K_+$ and $f_1\in \H$ with $g\leq S(f_1)$. Whenever there is an $f_2\in\H$ such that $f_1,f_2$ are paired with $S(f_2)=g$, the projection $P_C$ onto $C$ satisfies
\begin{align*}
P_C(f_1,g)=\frac 1 2(f_1+f_2,S(f_1)+g).
\end{align*}
\item[(d)]If $\K_+$ is a self-dual isotone projection cone, the projection $P_C$ onto $C$ satisfies
\begin{align*}
P_C(f_1,g)=\frac 1 2(f_2, (S(f_1)\vee g+g)_+),
\end{align*}
for $f_1\in\H,g\in \K$ whenever there is an $f_2\in\H$ such that
$f_1,f_2$ are paired and $S(f_2)=(S(f_1)\wedge g+S(f_1))_+$.

\end{itemize}
\end{proposition}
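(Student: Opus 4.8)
The plan is to treat (a)--(c) as scaffolding and to obtain the final formula (d) by reducing the projection onto $C$ to a pointwise minimization in an $L^2$-model. For (a) I would get closedness from the fact that $S$ is $1$-Lipschitz: expanding $\|S(f)-S(f')\|^2=\|f\|^2-2\langle S(f),S(f')\rangle+\|f'\|^2$ and using (S1) gives $\|S(f)-S(f')\|\le\|f-f'\|$, so $C$ is the preimage of the closed set $\K_+^\circ$ under the continuous map $(u,v)\mapsto v-S(u)$; convexity of $C$ is equivalent to convexity of $u\mapsto\langle S(u),h\rangle$ for each $h\in\K_+$, which follows from positive homogeneity and the triangle inequality of $S$ (Lemma \ref{symmetrization_triangle}). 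For (b) the forward implication uses the test point $(f_1,S(f_1))\in C$ (since $S(f_1)-S(f_1)=0\in\K_+^\circ$): invariance then reads $\langle Q_tS(f_1)-S(P_tf_1),h\rangle\ge0$ for all $h\in\K_+$, which is exactly domination in the form of Lemma \ref{abs_char_domination}(i). For the converse I would estimate, for $(u,v)\in C$, $\langle Q_tv-S(P_tu),h\rangle\ge\langle Q_t(v-S(u)),h\rangle$ by Lemma \ref{abs_char_domination}(i), and conclude using that a self-adjoint $Q_t$ preserving $\K_+$ also preserves $\K_+^\circ$.

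For (c) I would verify the two characterizing properties of $P_C$ from Remark \ref{simple-properties}. The key preliminary fact is that paired elements add under $S$: pairing makes $\langle f_1,f_2\rangle=\langle S(f_1),S(f_2)\rangle$ real and nonnegative, so $\|S(f_1+f_2)\|=\|S(f_1)+S(f_2)\|$, while (S1) gives $\langle S(f_1+f_2),S(f_1)+S(f_2)\rangle\ge\|S(f_1)+S(f_2)\|^2$, and equality in Cauchy--Schwarz forces $S(f_1+f_2)=S(f_1)+S(f_2)$. Hence the candidate $z=\tfrac12(f_1+f_2,S(f_1)+g)$ satisfies $z_2-S(z_1)=\tfrac12(S(f_1)+S(f_2))-\tfrac12S(f_1+f_2)=0\in\K_+^\circ$, so $z\in C$. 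Expanding in the real inner product of $\H\oplus\K$, the norm terms cancel and leave $\langle(f_1,g)-z,(u,v)-z\rangle=\tfrac12(\Re\langle f_1-f_2,u\rangle-\langle S(f_1)-g,v\rangle)$. Rewriting $S(f_1)-g=S(f_1-f_2)$ via Lemma \ref{abs_difference}, bounding the first term above by $\langle S(f_1-f_2),S(u)\rangle$ with (S1) and the second below by the same quantity using $v-S(u)\in\K_+^\circ$ and $S(f_1-f_2)\in\K_+$, the bracket is $\le0$, which identifies $z=P_C(f_1,g)$.

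For (d) the plan is a two-stage minimization. Self-duality gives $\K_+^\circ=\K_+$, so $C=\{(u,v):S(u)\le v\}$. Minimizing $\|f_1-u\|^2+\|g-v\|^2$ first over $v\ge S(u)$, Moreau's theorem \ref{Moreau} yields the optimal $v=S(u)+(g-S(u))_+$ with residual $\|(S(u)-g)_+\|^2$, leaving $F(u)=\|f_1-u\|^2+\|(S(u)-g)_+\|^2$ to minimize over $u\in\H$. By (S1), $\|f_1-u\|^2\ge\|S(f_1)-S(u)\|^2$, whence $F(u)\ge G(S(u))$ with $G(s)=\|S(f_1)-s\|^2+\|(s-g)_+\|^2$ for $s\in\K_+$. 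Passing to the $L^2$-model of Remark \ref{lattice_L2}, $G$ is minimized pointwise, and a one-variable computation (with $a=S(f_1)\ge0$: the minimizer of $(a-s)^2+((s-g)_+)^2$ over $s\ge0$ is $a$ when $a\le g$ and $\tfrac12(a+g)_+$ otherwise) gives $s^\ast=\tfrac12(S(f_1)\wedge g+S(f_1))_+$. The hypothesis provides $f_2$ paired with $f_1$ and $S(f_2)=2s^\ast$; since scaling preserves pairing, $\tfrac12 f_2$ is paired with $f_1$ and $S(\tfrac12 f_2)=s^\ast$, so $\|f_1-\tfrac12 f_2\|^2=\|S(f_1)-s^\ast\|^2$ and $F(\tfrac12 f_2)=G(s^\ast)\le G(S(u))\le F(u)$ for every $u$. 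Thus $z_1=\tfrac12 f_2$ is the minimizer, the associated $v$ is $z_2=S(z_1)+(g-S(z_1))_+$, and since $(z_1,z_2)\in C$ attains the global minimum it is the projection; a pointwise lattice identity rewrites $z_2$ as $\tfrac12(S(f_1)\vee g+g)_+$, giving the claimed formula.

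The main obstacle is the coupling between the $\H$- and $\K$-components in (d). The two ideas that decouple it are eliminating $v$ by Moreau's theorem and collapsing the $\H$-minimization to a minimization in the single order variable $s=S(u)\in\K_+$ through the sharp estimate $\|f_1-u\|^2\ge\|S(f_1)-S(u)\|^2$; its equality case is attained only by a partner of $f_1$, which is exactly why the pairing hypothesis (phrased through $S(f_2)=(S(f_1)\wedge g+S(f_1))_+$) is needed and why it produces $z_1=\tfrac12 f_2$. Once the problem lives in the $L^2$-model the remaining step is a routine scalar optimization; I expect the bookkeeping in the expansion for (c) and the pointwise lattice verification in (d) to be the most error-prone, though neither is conceptually deep.
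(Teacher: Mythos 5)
Your parts (a)--(c) are correct and essentially identical to the paper's argument: closedness and convexity of $C$ from the Lipschitz bound, positive homogeneity and the triangle inequality for $S$; the forward direction of (b) via the test point $(f,S(f))$ and the converse via self-adjointness of $Q_t$ together with invariance of $\K_+$; and (c) by verifying the variational characterization of $P_C$, with the same cancellation of norm terms (using $\norm{f_i}=\norm{S(f_i)}$) and the same appeal to Lemma \ref{abs_difference} and the defining property of $\K_+^\circ$. Part (d) is where you genuinely diverge, and your route is correct: the paper proves (d) by directly verifying the inequality $\Re\langle (f_1,g)-P(f_1,g),(u,v)-P(f_1,g)\rangle\le 0$ through a long chain of abstract lattice identities (isotonicity, $h\wedge \tilde h+h\vee\tilde h=h+\tilde h$, orthogonality of positive and negative parts), staying entirely inside the abstract ordered Hilbert space; you instead \emph{derive} the formula by minimizing the distance, eliminating $v$ via Moreau/Lemma \ref{projection_Riesz}, collapsing the $u$-variable to $s=S(u)$ through the sharp estimate $\norm{f_1-u}^2\ge\norm{S(f_1)-S(u)}^2$ (with equality at a partner of $f_1$), and then solving a scalar optimization after transporting $\K$ to an $L^2$-model via Penney's representation (Remark \ref{lattice_L2}). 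Your computations check out, including the pointwise minimizer $\tfrac12(a\wedge g+a)_+$ and the identity $s^\ast\vee g=\tfrac12(S(f_1)\vee g+g)_+$ in all sign cases, and uniqueness of the nearest point closes the argument. The trade-off: your proof is more conceptual and explains where the otherwise unmotivated projection formula comes from, but it leans on the nontrivial external representation theorem of Penney to reintroduce points --- precisely the crutch the paper advertises avoiding ("the main obstacle to overcome is the lack of points in our setting") --- whereas the paper's verification, though longer and purely computational, is intrinsic to the abstract isotone projection cone.
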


\begin{remark}
\begin{itemize}
\item If $(P_t)$ is dominated by $(Q_t)$ and $\K_+$ is self-dual, then $(Q_t)$ leaves $\K_+$ invariant by Lemma \ref{domination_sums}. Thus, the domination of $(P_t)$ by $(Q_t)$ is equivalent to the invariance of $C$ under $(W_t)$ in the case of self-dual cones.
\item Of course, the existence of an element $f_2\in\H$ as in (c), (d) is
automatically guaranteed  if $S$ is actually an absolute pairing.
However, for the following corollary we need the above proposition
when $S$ is the absolute value on $\K$, which is not necessarily an absolute pairing.
\end{itemize}
\end{remark}

\begin{proof} (a) Since $S$ is positive homogeneous and satisfies the triangle
inequality (Lem\-ma \ref{symmetrization_triangle}), it is clear that
$C$ is convex. By Lemma \ref{symmetrization_triangle}, $S$ is
continuous. Thus, $C$ is closed as the preimages of $\K_+$ under the
continuous map
\begin{align*}
\phi\colon \H\oplus\K\lra \K,(u,v)\mapsto v-S(u).
\end{align*}

\smallskip

(b) First assume that $C$ is invariant under $(W_t)$. Let $f\in \H$
and $t\geq 0$. Then we have $(f,S(f))\in C$, hence
\begin{align*}
(P_t f, Q_t S(f))=W_t(f,S(f))\in C,
\end{align*}
that is, $\langle S(P_t f),g\rangle\leq \langle Q_t S(f),g\rangle$ for all $g\in \K_+$. Thus, $(P_t)$ is dominated by
$(Q_t)$.

Conversely assume that $(P_t)$ is dominated by $(Q_t)$ and that $(Q_t)$ leaves $\K_+$ invariant. If $(u,v)\in C$, then
\begin{align*}
\langle S(P_t u),g\rangle\leq \langle Q_t S(u),g\rangle = \langle S(u),Q_t g\rangle\leq \langle v,Q_t g\rangle=\langle Q_t v,g\rangle
\end{align*}
for all $g\in \K_+$. Hence,
$W_t(u,v)=(P_tu,Q_tv)\in C$.

\smallskip

(c) Let $f_1\in \H$, $g\in\K_+$ such that $g\leq S(f_1)$ and assume
there is an $f_2\in\H$ such that $f_1,f_2$ are paired with $S(f_2)=g$. Define
\begin{align*}
P(f_1,g)=\frac 1 2(f_1+f_2,S(f_1)+g).
\end{align*}
 The projection $(\hat{f_1},\hat g)$ of $(f_1,g)$ onto $C$ is characterized as the unique element in $C$ satisfying
\begin{align*}
\Re\langle (f_1,g)-(\hat{f_1},\hat g),(u,v)-(\hat{f_1},\hat
g)\rangle\leq 0
\end{align*}
for all $(u,v)\in C$. We will show that $P(f_1,g)=(\hat f_1,\hat g)$.

Since $\langle S(f_1+f_2),h\rangle\leq \langle S(f_1)+S(f_2),h\rangle=\langle S(f_1)+g,h\rangle$ for all $h\in \K_+$, we have $P(f_1,g)\in C$. For all $(u,v)\in C$ we have
\begin{align*}
&\Re\langle (f_1,g)-P(f_1,g),(u,v)-P(f_1,g)\rangle\\
={}&\frac 1 4\Re\langle (f_1-f_2,g-S(f_1)),(2u-f_1-f_2,2v-S(f_1)-g)\rangle\\
={}&\frac 1 4\Re(\langle f_1-f_2,2u\rangle-\norm{f_1}^2+\norm{f_2}^2+2\langle g-S(f_1), v\rangle-\norm{g}^2+\norm{S(f_1)}^2)\\
={}&\frac 1 2\Re(\langle f_1-f_2,u\rangle+\langle g-S(f_1),v\rangle)
\end{align*}
Since $S(f_2)=g\leq S(f_1)$, we have $S(f_1-f_2)=S(f_1)-S(f_2)$ by
Lemma \ref{abs_difference} and therefore
\begin{align*}
\abs{\langle f_1-f_2,u\rangle}\leq \langle
S(f_1-f_2),S(u)\rangle\leq \langle S(f_1)-S(f_2),v\rangle=\langle S(f_1)-g,v\rangle.
\end{align*}
This implies
\begin{align*}
\frac 1 2\Re(\langle f_1-f_2,u\rangle+\langle g-S(f_1),v\rangle)&\leq \frac 1 2(\abs{\langle f_1-f_2,u\rangle}+\langle g-S(f_1),v\rangle)\\
&\leq\frac 1 2(\langle S(f_1)-g,v\rangle+\langle g-S(f_1),v\rangle)\\
&=0.
\end{align*}
Hence, $P(f_1,g)$ is the projection of $(f_1,g)$ on $C$.

\smallskip

(d) Let $f_1\in\H,g\in\K$ and $f_2\in \H$ such that $f_1,f_2$ are
paired with $S(f_2)=(S(f_1)\wedge g+S(f_1))_+$. Define
\begin{align*}
P(f_1,g)=\frac 1 2(f_2,(S(f_1)\vee g+g)_+).
\end{align*}
As in (c), we will show $P=P_C$ via the characterization of $P_C$ given above.

Since $\K_+$ is an isotone projection cone, $S(f_1)\wedge g+S(f_1)\leq g+S(f_1)\vee g$ implies $(S(f_1)\wedge g+S(f_1))_+\leq (S(f_1)\vee g+g)_+$, hence $P(f_1,g)\in C$.

So we have to show that
\begin{align*}
\Re\langle (f_1,g)-P(f_1,g),(u,v)-P(f_1,g)\rangle\leq 0
\end{align*}
for all $(u,v)\in C$.

We will evaluate the terms
$$
I=\Re\langle (f_1,g)-P(f_1,g),(u,v)\rangle \mbox{ and }  J=\langle
(f_1,g)-P(f_1,g),-P(f_1,g)\rangle$$ separately. Using $\abs{\langle
f,\tilde f\rangle}\leq \langle S(f),S(\tilde f)\rangle$ for
$f,\tilde f\in\H$, and $S(u)\leq v$, we obtain
\begin{align*}
I&=\Re\langle f_1-\frac 1 2 f_2,u\rangle+\langle g-\frac 1 2(S(f_1)\vee g+g)_+,v\rangle\\
&\leq \langle S(f_1-\frac 1 2f_2),S(u)\rangle+\langle g-\frac 1 2(S(f_1)\vee g+g)_+,v\rangle\\
&\leq \langle S(f_1-\frac 1 2f_2)+g-\frac 1 2(S(f_1)\vee
g+g)_+,v\rangle.
\end{align*}
Lemma \ref{abs_difference} implies
\begin{align*}
S(f_1-\frac 1 2 f_2)=S(f_1)-\frac 12 S(f_2)=S(f_1)-\frac 1
2(S(f_1)\wedge g+S(f_1))_+.
\end{align*}
Thus,
\begin{align*}
I&\leq\langle S(f_1)-\frac 1 2(S(f_1)\wedge g+S(f_1))_+ +g-\frac 1 2(S(f_1)\vee g+g)_+,v\rangle\\
&\leq \langle S(f_1)+g-\frac 1 2(S(f_1)\wedge g+S(f_1))-\frac 1 2(S(f_1)\vee g+g),v\rangle\\
&=\langle S(f_1)+g-\frac 1 2 (S(f_1)+g+S(f_1)+g),v\rangle\\
&=0,
\end{align*}
where we used $h\leq h_+$ and $h\wedge \tilde h+h\vee \tilde h=h+\tilde h$ for $h,\tilde h\in \K$.

Next let us turn to $J$:
\begin{align*}
J{}={}&\langle f_1-\frac 1 2 f_2,-\frac 1 2 f_2\rangle+\langle g-\frac 1 2(S(f_1)\vee g+g)_+,-\frac 1 2(S(f_1)\vee g+g)_+\rangle\\
={}&-\frac 1 2\langle S(f_1),S(f_2)\rangle+\frac 1 4\langle S(f_2),S(f_2)\rangle-\frac 1 2\langle g,(S(f_1)\vee g+g)_+\rangle\\
{}&+\frac 1 4\langle (S(f_1)\vee g+g)_+,(S(f_1)\vee g+g)_+\rangle\\
={}&-\frac 1 2 \langle S(f_1),(S(f_1)\wedge g+S(f_1))_+\rangle+\frac  14 \norm{(S(f_1)\wedge g+S(f_1))_+}^2\\
{}&-\frac 1 2\langle g,(S(f_1)\vee g+g)_+\rangle+\frac 1
4\norm{(S(f_1)\vee g+g)_+}^2.
\end{align*}
Since positive and negative part are orthogonal to each other, we
can write
\begin{align*}
\norm{(S(f_1)\wedge g+S(f_1))_+}^2=\langle S(f_1)\wedge
g+S(f_1),(S(f_1)\wedge g+S(f_1))_+\rangle
\end{align*}
and likewise for $\norm{(S(f_1)\vee g+g)_+}^2$.

Using once again $h\wedge \tilde h+h\vee \tilde h=h+\tilde h$ for
$h,\tilde h\in \K$, it follows that
\begin{align*}
4J{}={}&\langle S(f_1)\wedge g -S(f_1),(S(f_1)\wedge g+S(f_1))_+\rangle\\
{}&+\langle S(f_1)\vee g-g,(S(f_1)\vee g+g)_+\rangle\\
={}&\langle g-S(f_1)\vee g,(S(f_1)\wedge g+S(f_1))_+\rangle+\langle S(f_1)\vee g-g,(S(f_1)\vee g+g)_+\rangle\\
={}&\langle S(f_1)\vee g-g,(S(f_1)\vee g+g)_+-(S(f_1)\wedge
g+S(f_1))_+\rangle.
\end{align*}
We analyze the factors of the inner product separately.

As for the first factor, isotonicity implies that
\begin{align*}
(S(f_1)\vee g+g)_+-(S(f_1)\wedge g+S(f_1))_+\geq
(S(f_1)+g)_+-(g+S(f_1))_+=0
\end{align*}
and
\begin{align*}
(S(f_1)\vee g+g)_+-(S(f_1)\wedge g+S(f_1))_+\geq (2g)_+-2S(f_1)\geq
2(g-S(f_1)),
\end{align*}
hence
\begin{align*}
(S(f_1)\vee g+g)_+-(S(f_1)\wedge g+S(f_1))_+\geq 2(g-S(f_1))_+.
\end{align*}
Moreover
\begin{align*}
(S(f_1)\vee g+g)-(S(f_1)\wedge g+S(f_1))&=g-S(f_1)+S(f_1)\vee g-S(f_1)\wedge g\\
&=g-S(f_1)+\abs{g-S(f_1)}\\
&=2(g-S(f_1))_+.
\end{align*}
Let $h,\tilde h\in\K$ such that $\tilde h-h\geq 0,\,\tilde
h_+-h_+\geq \tilde h-h$. Then
\begin{align*}
\tilde h_- -h_-=(\tilde h_+-h_+)-(\tilde h-h)\geq 0.
\end{align*}
On the other hand, $\tilde h\geq h$ implies by isotonicity $\tilde h_-\leq h_-$. Combining both inequalities yields $\tilde h_-=h_-$ and consequently $\tilde h_+ - h_+=\tilde h-h$.

Applied to $\tilde h=S(f_1)\vee g+g$ and $h=S(f_1)\wedge g+S(f_1)$
this means that
\begin{align*}
(S(f_1)\vee g+g)_+ -(S(f_1)\wedge g+S(f_1))_+=2(g-S(f_1))_+.
\end{align*}
For the other factor in the inner product expression for $4J$ we
have:
\begin{align*}
S(f_1)\vee g-g&=\frac 1 2(S(f_1)+g+\abs{S(f_1)-g})-g\\
&=\frac 1 2(S(f_1)-g+\abs{S(f_1)-g})\\
&=(S(f_1)-g)_+\\
&=(g-S(f_1))_-.
\end{align*}
Thus,
\begin{align*}
4J&=\langle S(f_1)\vee g-g,(S(f_1)\vee g+g)_+-(S(f_1)\wedge g+S(f_1))_+\rangle\\
&=2\langle(g-S(f_1))_-,(g-S(f_1))_+\rangle\\
&=0.
\end{align*}
Combining the results for $I$ and $J$ we finally obtain the desired
result:
\begin{align*}
\Re\langle (f_1,g)-P(f_1),(u,v)-P(f_1,g)\rangle=I+\Re J\leq
0.&\qedhere
\end{align*}
\qedhere
\end{proof}

\begin{corollary}\label{f_2_unique}
Let $\K_+\subset \K$ be a positive cone and
$S\colon\H\lra\K_+$ an absolute pairing. If $f_1\in\H$ and $g\in \K_+$
with $g\leq S(f_1)$, then the element $f_2\in\H$ such that $f_1,f_2$
are paired and $S(f_2)=g$ is unique.
\end{corollary}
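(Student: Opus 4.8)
The plan is to deduce the uniqueness of $f_2$ from the explicit projection formula in part~(c) of Proposition~\ref{domination_invariance}, using that the metric projection onto a closed convex subset of a Hilbert space is uniquely determined.

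First I would introduce the closed convex set associated with $S$ exactly as in Proposition~\ref{domination_invariance}, namely
\[
C = \{(u,v) \in \H \oplus \K \mid v - S(u) \in \K_+^\circ\}.
\]
Parts (a) and (c) of that proposition concern only $S$ and $C$, not the semigroups appearing in its statement, so they apply here verbatim. By part~(a), $C$ is closed and convex in the Hilbert space $\H \oplus \K$, and therefore the metric projection $P_C$ is a well-defined single-valued map.

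Then I would argue by comparison. Suppose $f_2$ and $f_2'$ are both paired with $f_1$ and satisfy $S(f_2) = S(f_2') = g$. Since $g \leq S(f_1)$ by assumption, the hypotheses of part~(c) of Proposition~\ref{domination_invariance} are met for each of $f_2$ and $f_2'$, giving
\[
P_C(f_1, g) = \tfrac{1}{2}\bigl(f_1 + f_2,\, S(f_1) + g\bigr) = \tfrac{1}{2}\bigl(f_1 + f_2',\, S(f_1) + g\bigr).
\]
Comparing the first coordinates of these equal elements of $\H \oplus \K$ yields $f_1 + f_2 = f_1 + f_2'$, and hence $f_2 = f_2'$.

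This completes the argument. The entire force of the statement is already contained in part~(c) of Proposition~\ref{domination_invariance}; here one only needs to observe that its hypotheses hold for every admissible $f_2$ and then invoke the uniqueness of the metric projection. Accordingly I expect no substantial obstacle, the one point worth emphasizing being that it is the uniqueness of $P_C(f_1,g)$, and not any direct computation with $f_2$ and $f_2'$, that forces the two candidates to coincide.
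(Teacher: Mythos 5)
Your argument is correct and is exactly the paper's own proof: the authors likewise deduce uniqueness from Proposition \ref{domination_invariance}(c) together with the uniqueness of the metric projection onto the closed convex set $C$. You have merely spelled out the coordinate comparison that the paper leaves implicit.
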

\begin{proof}
This follows from Proposition \ref{domination_invariance} (c) since
the projection is unique.
\end{proof}

\begin{corollary}\label{cor_proj_pos_form}
Let $\K_+\subset \K$ be a self-dual isotone projection cone and $\b$
a form in $\K$ satisfying the first Beurling-Deny criterion. Let
\begin{align*}
\pi\colon \K_+\oplus\K\lra \K_+\oplus\K_+,\,(h,g)\mapsto \frac 1
2((h\wedge g+h)_+,(h\vee g+g)_+).
\end{align*}
Then $\pi(D(\b)\oplus D(\b))\subset D(\b)\oplus D(\b)$ and
\begin{align*}
(\b\oplus\b)(\pi(h,g),(1-\pi)(h,g))\geq 0
\end{align*}
holds for all $h\in D(\b)_+, g\in D(\b)$.
\end{corollary}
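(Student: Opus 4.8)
The plan is to recognize the map $\pi$ as the metric projection $P_C$ onto the convex set $C$ of Proposition \ref{domination_invariance}, and then to read off both assertions from the invariance of $C$ via Ouhabaz's characterization (Proposition \ref{invariance_ouhabaz}). To set this up I would specialize Proposition \ref{domination_invariance} to $\H=\K$ and to the map $S=\abs{\cdot}$, which is an absolute pairing by Lemma \ref{absolute_mapping_Riesz} since $\K_+$ is a self-dual isotone projection cone. Self-duality gives $\K_+^\circ=\K_+$, so the convex set becomes $C=\{(u,v)\in\K\oplus\K\mid \abs{u}\leq v\}$, and Theorem \ref{isotone_latticial} guarantees that $(\K,\leq)$ is a Riesz space, so the lattice expressions defining $\pi$ are meaningful.

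The key step is to identify $\pi$ with $P_C$ on $\K_+\oplus\K$. Fix $h\in\K_+$ and $g\in\K$; then $S(h)=\abs{h}=h$, so the partner prescribed by Proposition \ref{domination_invariance}(d) must satisfy $S(f_2)=(h\wedge g+h)_+$. I would simply take $f_2:=(h\wedge g+h)_+\in\K_+$: since both $h$ and $f_2$ lie in the cone, the pairing identity $\langle h,f_2\rangle=\langle S(h),S(f_2)\rangle$ holds trivially, so $h$ and $f_2$ are paired. Proposition \ref{domination_invariance}(d) then yields
$$P_C(h,g)=\frac12\big(f_2,(S(h)\vee g+g)_+\big)=\frac12\big((h\wedge g+h)_+,(h\vee g+g)_+\big)=\pi(h,g),$$
so that $\pi=P_C$ on $\K_+\oplus\K$.

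Next I would check that $C$ is invariant under the semigroup $W_t=e^{-tB}\oplus e^{-tB}$ on $\K\oplus\K$, where $B$ is the self-adjoint operator associated with $\b$. Because $\b$ satisfies the first Beurling-Deny criterion, Corollary \ref{char-pos-form-via-cone} shows that $e^{-tB}$ leaves $\K_+$ invariant; Example \ref{ex_positive_semigroup_dominating} then gives that $e^{-tB}$ is dominated by itself (for $S=\abs{\cdot}$), and Proposition \ref{domination_invariance}(b) delivers the invariance of $C$ under $W_t$.

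Finally I would invoke Ouhabaz. The form associated with $W_t$ is $\b\oplus\b$ with domain $D(\b)\oplus D(\b)$, and $C$ is invariant, so the implication (i)$\Rightarrow$(iii) of Proposition \ref{invariance_ouhabaz} gives $P_C(D(\b)\oplus D(\b))\subset D(\b)\oplus D(\b)$ together with $(\b\oplus\b)(P_C u,u-P_C u)\geq 0$ for all $u\in D(\b)\oplus D(\b)$ (the real part being superfluous as $\K$ is real and $\b$ symmetric). Restricting to $u=(h,g)$ with $h\in D(\b)_+$, $g\in D(\b)$ and substituting $P_C(h,g)=\pi(h,g)$ from the second step yields precisely the two claimed assertions. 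The one genuinely substantive point is this identification $\pi=P_C$: one has to observe that the absolute value is the identity on $\K_+$ and that a valid paired partner can be chosen \emph{inside} the cone, so that the opaque formula of Proposition \ref{domination_invariance}(d) collapses to the explicit map $\pi$; the rest is bookkeeping with results already at hand.
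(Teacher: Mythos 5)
Your proposal is correct and follows essentially the same route as the paper: domination of $e^{-tB}$ by itself plus positivity preservation gives invariance of $C=\{(u,v)\mid \abs{u}\leq v\}$ under $e^{-tB}\oplus e^{-tB}$, the observation that two elements of the cone are automatically paired lets Proposition \ref{domination_invariance}(d) collapse to the explicit formula $\pi=P_C$ on $\K_+\oplus\K$, and Proposition \ref{invariance_ouhabaz} then yields both assertions. No gaps.
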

\begin{proof}
Let $(P_t)$ be the semigroup associated with $\b$. By Proposition \ref{invariance_ouhabaz}, $(P_t)$ preserves $\K_+$ and in Example \ref{ex_positive_semigroup_dominating} it was shown that $(P_t)$ is dominated by itself.

By Proposition \ref{domination_invariance},
\begin{align*}
C=\{(u,v)\in\K\oplus\K\colon \abs{u}\leq v\}
\end{align*}
is invariant under $(P_t\oplus P_t)$.

Since two positive elements of $\K$ are obviously paired (with
respect to $\abs\cdot$), we can apply Proposition
\ref{domination_invariance} to deduce that the projection onto $C$
satisfies
\begin{align*}
P(h,g)=\frac 1 2 ((h\wedge g+h)_+,(h\vee g+g)_+)=\pi(h,g)
\end{align*}
for all $g\in\K, h\in \K_+$.

One more application of Proposition \ref{invariance_ouhabaz} yields
$\pi(D(\b)\oplus D(\b))\subset D(\b)\oplus D(\b)$ and
\begin{align*}
(\b\oplus\b)(\pi(h,g),(1-\pi)(h,g))\geq 0
\end{align*}
for all $h\in D(\b)_+, g\in D(\b)$.
\end{proof}

\section{Characterizing domination of operators via forms} \label{sec_Domination_of_operators}
In this section we characterize domination of semigroups via a
domination property of forms. Roughly speaking, we do so by
combining  the framework provided in \cite{HSU} with the methods
provided in \cite{Ouh96, MVV}. More specifically, we follow the
strategy of \cite{Ouh96,MVV} and first provide below a
characterization of domination of semigroups by the invariance of a
certain convex set and then relate this invariance  to a positivity
property of the form by means of  the result of \cite{Ouh96},
Proposition \ref{invariance_ouhabaz}.

\medskip

 The  notion of generalized ideal -- to be defined next -- was  originally coined by Ouhabaz (cf.
\cite{Ouh96}) under the name ideal, but that collides with the usual
terminology in order theory, so we have adopted the usage of
\cite{MVV}.

\begin{definition}[Generalized ideal]
Let $\K_+\subset\K$ be a positive cone and $S\colon\H\lra\K_+$ an absolute pairing. A subspace $U\subset\H$ is called generalized ideal
of the subspace $V\subset\K$ if the following properties hold:
\begin{itemize}
\item[(I1)]$S(f)\in V$ for all $f\in U$,
\item[(I2)]For all $f_1\in U$ and $g\in V_+$ such that $g\leq S(f_1)$ there is an $f_2\in U$ such that $f_1,f_2$ are paired with $S(f_2)=g$.
\end{itemize}
\end{definition}

This notion is obviously closely related to that of an absolute pairing
on $U$. Notice however that contrary to the definition of a
symmetrization, we only demand the existence of $f_2\in U$ if $g\leq
S(f_1)$ here, so that $S$ does not necessarily restrict to an absolute pairing $U\lra V_+$.

\begin{remark}
Let $\K_+\subset\K$ be a positive cone and
$S\colon\H\lra\K_+$ an absolute pairing.
 Let $U$ be a generalized ideal of $V$. If $f_1\in U$, $g\in V$
such that $g\leq S(f_1)$, there is only one $f_2\in \H$ such that
$f_1,f_2$ are paired with $S(f_2)=g$ by Corollary \ref{f_2_unique}. Then
condition (I2) implies that $f_2\in U$.
\end{remark}

\begin{example}
Let $X$ be a topological space, $m$ a Borel measure on $X$ and $E$ a
Hermitian vector bundle over $X$. Then the subspace $U\subset
L^2(X,m;E)$ is a generalized ideal of the subspace $V\subset
L^2(X,m)$ if and only if
\begin{itemize}
\item $u\in U$ implies $\abs{u}\in V$,
\item $u\in U, v\in V_+, v\leq \abs{u}$ implies $v\sgn u\in U$.
\end{itemize}
\end{example}
Since $v\leq \abs{u}$ in the second condition, it is irrelevant
which value $\sgn u$ has at the zeros of $u$, and we can stick to
the usual convention $\sgn u(x)=0$ if $u(x)=0$ instead of taking
$\sgn_{\xi}$ from Example \ref{ex_sym_L2}.

\begin{definition}[Domination of forms]
Let $\K_+\subset \K$ be a positive cone, $S\colon \H\lra \K_+$ an absolute pairing, and $\a$ (resp. $\b$) a closed form on $\H$ (resp. $\K$). Then $\a$ is said to be dominated by $\b$ if $D(\a)$ is a generalized ideal of $D(\b)$ and
\begin{align*}
\Re\a(f_1,f_2)\geq \b(S(f_1),S(f_2))
\end{align*}
holds for all $f_1,f_2\in D(\a)$ that are paired.
\end{definition}

Now we can give the characterization of domination of semigroups in terms of the associated forms. For comments on the history of this theorem as well as on the proof, see the remarks below.

\begin{theorem}[Characterization of domination of semigroups via forms]
\label{thm_char_domination}
Let $\K_+\subset\K$ be a positive cone, $S\colon\H\lra\K_+$ an absolute pairing, $A$ (resp. $B$) a self-adjoint operator on $\H$ (resp. $\K)$ with lower bound $-\lambda$, and $\a$ (resp. $\b)$ the associated form. Assume that $(e^{-tB})$ leaves $\K_+$ invariant.

The following assertions are equivalent:
\begin{itemize}
\item[(i)]The semigroup $(e^{-tA})_{t\geq 0}$ is dominated by $(e^{-tB})_{t\geq 0}$.
\item[(ii)]The resolvent $((A+\alpha)^{-1})_{\alpha>\lambda}$ is dominated by $((B+\alpha)^{-1})_{\alpha>\lambda}$.
\end{itemize}
Both assertions imply
\begin{itemize}
\item[(iii)]The form $\a$ is dominated by $\b$.

\end{itemize}
If $\K_+$ is a self-dual isotone projection cone, the assertions (i), (ii) and (iii) are equivalent.
\end{theorem}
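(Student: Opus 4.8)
The plan is to lift everything to $\H\oplus\K$ and play Proposition \ref{domination_invariance} off against Ouhabaz's Proposition \ref{invariance_ouhabaz}. Put $W_t=e^{-tA}\oplus e^{-tB}$, a semigroup with generator $A\oplus B$ of lower bound $-\lambda$ and associated form $\a\oplus\b$, and let $C=\{(u,v):v-S(u)\in\K_+^\circ\}$. Applied to $(W_t)$ and $C$, Proposition \ref{invariance_ouhabaz} tells me that invariance of $C$ under $(W_t)$, invariance under the resolvents $\alpha(A\oplus B+\alpha)^{-1}$ ($\alpha>\lambda$), and the projection inequality $\Re(\a\oplus\b)(P_Cw,w-P_Cw)\ge0$ on $D(\a)\oplus D(\b)$ are all equivalent. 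For (i)$\Leftrightarrow$(ii) I first note that $\alpha(B+\alpha)^{-1}=\int_0^\infty\alpha e^{-\alpha t}e^{-tB}\,dt$ preserves the closed convex cone $\K_+$, since each $e^{-tB}$ does. The argument of Proposition \ref{domination_invariance}(b) uses only that $(f,S(f))\in C$ and that the operator acting on $\K$ preserves $\K_+$, so it applies verbatim both to the semigroups and to the resolvents: $(e^{-tA})$ is dominated by $(e^{-tB})$ iff $C$ is $(W_t)$-invariant, and $(\alpha(A+\alpha)^{-1})$ is dominated by $(\alpha(B+\alpha)^{-1})$ iff $C$ is invariant under $\alpha(A\oplus B+\alpha)^{-1}$. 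Since domination is insensitive to the positive factor $\alpha$, these two dominations are exactly (i) and (ii), and the two invariances agree by Ouhabaz; this gives (i)$\Leftrightarrow$(ii).

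For (i)$\Rightarrow$(iii) (no self-duality needed) I start from $(W_t)$-invariance of $C$, hence from the projection inequality together with $P_C(D(\a)\oplus D(\b))\subset D(\a)\oplus D(\b)$. Inserting $(f_1,0)$ into the formula of Proposition \ref{domination_invariance}(c) (the partner of modulus $0$ is $0$) gives $\tfrac12 S(f_1)\in D(\b)$, i.e. (I1); inserting $(f_1,g)$ with $g\in D(\b)_+$, $g\le S(f_1)$, forces the partner $f_2$ into $D(\a)$, i.e. (I2); so $D(\a)$ is a generalized ideal of $D(\b)$. Feeding $(f_1,S(f_2))$ with $S(f_2)\le S(f_1)$ into the projection inequality and using (c) collapses, after cancelation, to the monotonicity $\Phi(f_1)\ge\Phi(f_2)$ of $\Phi(f):=\a(f)-\b(S(f))$ (well defined by (I1)). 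Taking $S(f_2)=0$ yields $\Phi\ge0$, and applying the monotonicity to the paired pair $(f_1,f_1-f_2)$, legitimate since $S(f_1-f_2)=S(f_1)-S(f_2)\le S(f_1)$ by Lemma \ref{abs_difference}, and expanding gives the cross inequality $\Re\a(f_1,f_2)\ge\b(S(f_1),S(f_2))$ whenever $S(f_2)\le S(f_1)$. To remove this restriction I use that paired elements satisfy $S(a+b)=S(a)+S(b)$ (equal norms plus the triangle inequality force this, exactly as in the uniqueness step inside the proof of Lemma \ref{abs_difference}): applying the cross inequality to the modulus-ordered paired pair $(sf_1+tf_2,\,sf_1)$ gives $st(\Re\a(f_1,f_2)-\b(S(f_1),S(f_2)))\ge -s^2\Phi(f_1)$, and letting $t\to\infty$ yields the inequality for all paired $f_1,f_2$. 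With (i)$\Leftrightarrow$(ii) this shows that both (i) and (ii) imply (iii).

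For (iii)$\Rightarrow$(i) in the self-dual isotone case I verify Ouhabaz's projection condition for $(W_t)$ and $C$ and then read off invariance, hence (i), from Proposition \ref{domination_invariance}(b). Now $\K$ is a lattice (Theorem \ref{isotone_latticial}), $\b$ satisfies the first Beurling-Deny criterion, $D(\b)$ is a sublattice (Lemma \ref{pos_form_lattice}), and $P_C$ is given by the explicit formula of Proposition \ref{domination_invariance}(d). This direction is the main obstacle and has two parts. For domain invariance, the second component of $P_C(f_1,g)$ is a lattice expression in $S(f_1),g\in D(\b)$ and so lies in $D(\b)$; for the first component $\tfrac12 f_2$ the key point is $S(f_2)=(S(f_1)\wedge g+S(f_1))_+\le 2S(f_1)$, whence $\tfrac12 f_2$ is the partner of $f_1$ of modulus $\tfrac12 S(f_2)\le S(f_1)$, which lies in $D(\a)$ by (I2) together with the uniqueness of such partners (Corollary \ref{f_2_unique}). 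The inequality $\Re(\a\oplus\b)(P_C(f_1,g),(f_1,g)-P_C(f_1,g))\ge0$ is then a form-level transcription of the computation in the proof of Proposition \ref{domination_invariance}(d): split it into an $\a$-part and a $\b$-part, bound the $\a$-cross-term below by the matching $\b$-term via the cross inequality of (iii), and control the remaining $\b$-terms with the sublattice quadratic estimates of Lemma \ref{pos_form_lattice} and the identities $h\wedge\tilde h+h\vee\tilde h=h+\tilde h$ and $S(f_1)\vee g-g=(g-S(f_1))_-$ used there. It is this computation, rather than any conceptual step, that carries the technical weight.
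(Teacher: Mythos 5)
Your reduction to Ouhabaz's criterion on $\H\oplus\K$, your treatment of (i)$\Leftrightarrow$(ii), and your derivation of (i)$\Rightarrow$(iii) are all sound. The last of these is more roundabout than the paper's (which simply writes $\Re\a(f_1,f_2)=\lim_{t\downto 0}\frac1t\Re\langle f_1-e^{-tA}f_1,f_2\rangle$ and inserts the semigroup domination inequality directly, with no need for the restriction $S(f_2)\le S(f_1)$ or the scaling trick with $(sf_1+tf_2,sf_1)$), but your monotonicity-of-$\Phi$ argument does go through. The problem is the final step of (iii)$\Rightarrow$(i).

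After you apply the cross inequality of (iii) to the $\a$-part, what remains to be shown is, with $h=S(f_1)$, $p=\frac12(h\wedge g+h)_+$ and $q=\frac12(h\vee g+g)_+$, the inequality $\b(p,h-p)+\b(q,g-q)\ge 0$. You assert this follows by transcribing the computation in the proof of Proposition \ref{domination_invariance}(d) to the form level, using Lemma \ref{pos_form_lattice} and the lattice identities. That computation, however, is irreducibly an inner-product computation: it uses the exact orthogonality $\langle k_+,k_-\rangle=0$ (for $\b$ one only has $\b(k_+,k_-)\le 0$, and the sign of the term in which it occurs matters) and, repeatedly, the monotonicity $\langle w,v\rangle\le\langle w',v\rangle$ for $w\le w'$, $v\ge 0$, which has no analogue for $\b(\cdot,v)$; the estimates of Lemma \ref{pos_form_lattice}(b) are upper bounds on $\b_\alpha(g\wedge h)$ and $\b_\alpha(g\vee h)$ and do not produce the required lower bound on these cross terms. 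The missing idea is conceptual, not computational: the map $\pi(h,g)=(p,q)$ is precisely the projection in $\K\oplus\K$ onto the convex set $\{(u,v):\abs{u}\le v\}$, and this set is invariant under $e^{-tB}\oplus e^{-tB}$ because $\b$ satisfies the first Beurling--Deny criterion and is therefore dominated by itself; a \emph{second} application of Ouhabaz's Proposition \ref{invariance_ouhabaz}, now to $\b\oplus\b$ and this set, yields exactly $(\b\oplus\b)(\pi(h,g),(1-\pi)(h,g))\ge 0$ (this is the paper's Corollary \ref{cor_proj_pos_form}). Without this step, or some substitute for it, your proof of (iii)$\Rightarrow$(i) does not close.
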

\begin{proof}
The equivalence of (i) and (ii) follows easily from the correspondence between semigroups and associated resolvents (see \cite{Ber}, Appendix A, Corollary 15).

Next we do some preparatory work for (i)$\implies$(iii) and (iii)$\implies$(i) in the isotone case.

Let $C=\{(u,v)\in\H\oplus\K\mid v-S(u)\in\K_+^\circ\}$ and $W_t=e^{-tA}\oplus e^{-tB}\colon\H\oplus \K\lra\H\oplus \K$ for $t\geq 0$. By Proposition \ref{domination_invariance} (b), (i) is equivalent to $W_t C\subset C$ for all $t\geq 0$.

The form $\tau$ associated to $(W_t)$ is given by
\begin{align*}
D(\tau)&=\{(u,v)\in\H\oplus\K\mid \lim_{t\downto 0}\frac 1 t\langle (u,v)-W_t(u,v),(u,v)\rangle<\infty\}\\
&=\{(u,v)\in\H\oplus\K\mid \lim_{t\downto 0}\frac 1 t(\langle u-e^{-tA}u,u\rangle+\langle v-e^{-tB}v,v\rangle)<\infty\}\\
&=D(\a)\oplus D(\b),\\
\tau((u,v))&=\lim_{t\downto 0}\frac 1 t(\langle
u-e^{-tA}u,u\rangle+\langle v-e^{-tB}v,v\rangle) =\a(u)+\b(v).
\end{align*}
By Proposition \ref{invariance_ouhabaz}, $C$ is invariant under $(W_t)$ if and only if $P_C D(\tau)\subset D(\tau)$ and
\begin{align*}
\Re \tau(P_C(f,g),(f,g)-P_C(f,g))\geq 0
\end{align*}
for all $(f,g)\in D(\a)\oplus D(\b)$.

(i)$\implies$(iii): By Proposition \ref{domination_invariance} (b), the projection $P_C$ satisfies
\begin{align*}
P_C(f_1,g)=\frac 1 2(f_1+f_2,S(f_1)+g)
\end{align*}
for all $f_1,f_2\in\H$, $g\in\K_+$ such that $f_1,f_2$ are paired with $S(f_2)=g$ and $g\leq S(f_1)$.

Now assume additionally that $f_1\in D(\a),\,g\in D(\b)$. If $C$ is invariant under $(W_t)$, then
\begin{align*}
P_C(f_1,g)=\frac 1 2(f_1+f_2,S(f_1)+g)\in D(\a)\oplus D(\b),
\end{align*}
hence $f_2\in D(\a)$ and $S(f_1)\in D(\b)$. Thus, $D(\a)$ is a generalized ideal of $D(\b)$.

Let $f_1,f_2\in D(\a)$ be paired. Then $S(f_1),S(f_2)\in D(\b)$ and
\begin{align*}
\Re \a(f_1,f_2)&=\lim_{t\downto 0}\frac 1 t\Re\langle f_1-e^{-tA}f_1,f_2\rangle\\
&=\lim_{t\downto 0}\frac 1 t(\langle S(f_1),S(f_2)\rangle-\Re\langle e^{-tA}f_1,f_2\rangle)\\
&\geq\lim_{t\downto 0}\frac 1 t (\langle S(f_1),S(f_2)\rangle-\langle e^{-tB}S(f_1),S(f_2)\rangle)\\
&=\lim_{t\downto 0}\frac 1 t\langle Sf_1-e^{-tB}S(f_1),S(f_2)\rangle\\
&=\b(S(f_1),S(f_2)).
\end{align*}
For the remainder of the proof we assume that $\K_+$ is a self-dual isotone projection cone. Also note that $\b$ satisfies the first Beurling-Deny criterion by Corollary \ref{char-pos-form-via-cone}. In particular, $D(\b)$ is a sublattice of $\K$ by Lemma \ref{pos_form_lattice}.

(iii)$\implies$(i): First we show that $P_C D(\tau)\subset D(\tau)$. For that purpose let $(f_1,g)\in D(\tau)$. By Proposition \ref{domination_invariance} (d), the projection $P_C(f_1,g)$ is given by
\begin{align*}
P_C(f_1,g)=\frac 1 2(f_2, (S(f_1)\vee g+g)_+),
\end{align*}
where $f_2\in \H$ such that $f_1,f_2$ are paired with $S(f_2)=(S(f_1)\wedge g+S(f_1))_+$.

By isotonicity,
\begin{align*}
S(f_2)=(S(f_1)\wedge g+S(f_1))_+\leq (2S(f_1))_+=2 S(f_1).
\end{align*}
Since $D(\a)$ is generalized ideal of $D(\b)$ and $D(\b)$ a subattice of $\K$, this inequality implies $f_2\in D(\a)$.

Furthermore, $S(f_1)\in D(\b)$ once again since $D(\a)$ is a generalized ideal in $D(\b)$ and therefore $\frac 1 2 (S(f_1)\vee g+g)_+\in D(\b)$ since $D(\b)$ is a sublattice of $\K$. Hence, $P_C(f_1,g)\in D(\tau)$.

Let $g_2=\frac 1 2(S(f_1)\vee g+g)_+$. By (iii) and Lemma \ref{abs_difference} we have
\begin{align*}
\Re \tau(P_C(f_1,g),(1-P_C)(f_1,g))&=\Re\a\left(\frac 1 2 f_2,f_1-\frac 1 2 f_2\right)+\b\left(g_2,g- g_2\right)\\
&\geq \b\left(\frac 1 2 S(f_2), S(f_1-\frac 1 2 f_2)\right)+\b(g_2,g-g_2)\\
&= \b\left(\frac 1 2 S(f_2),S(f_1)-\frac 1 2S(f_2)\right)+\b\left( g_2,g- g_2\right)\\
&=(\b\oplus\b)\left(\frac 1 2(S(f_2),g_2),(S(f_1),g)-(S(f_2),g_2)\right).
\end{align*}
Now Corollary \ref{cor_proj_pos_form} implies $(S(f_2),g_2)=\pi(S(f_1),g)$ and
\begin{align*}
(\b\oplus\b)(\pi(S(f_1),g),(1-\pi)(S(f_1),g))\geq 0.
\end{align*}
Therefore,
\begin{align*}
\Re\tau(P_C(f_1,g),(1-P_C)(f_1,g))\geq 0.
\end{align*}
In the light of our preparatory work this means that $(e^{-tA})$ is dominated by $(e^{-tB})$.
\end{proof}

\begin{remark}
\begin{itemize}
\item As already discussed in the introduction there is quite some history to a result of this form.
In particular, a first version of this theorem was given
independently by Simon (see \cite{Sim79b}, Thm. 1) for operators on
$L^2$-spaces and by Hess, Schrader, Uhlenbrock (see \cite{HSU}, Thm
2.15) in the setting of absolute pairings between abstract Hilbert
spaces (that semigroup domination implies a Kato inequality had
already been noted by Simon in \cite{Sim77}, Thm 5.1). Both articles
did not give a characterization purely in terms of forms, but the
following inequality
\begin{align*}
\Re \langle g\,\overline{\sgn f} ,Af\rangle\geq \b(\abs{f},g)
\end{align*}
for $f\in D(A),\,g\in D(\b)_+$. The characterization in terms of the
associated forms was first given by Ouhabaz \cite{Ouh96} for
semigroups on $L^2$-spaces. This was then
generalized in \cite{MVV} to vector-valued $L^2$-spaces.  In fact, \cite{MVV} is
concerned with some further ramifications of this theorem in the
case of $L^2$-spaces, allowing not necessarily densely defined,
sectorial forms and giving criteria on cores. We believe that those
carry over to our more abstract setting, however, this is not the
focus of this article.

\item If $\K_+$ is self-dual, then the assumption that $(e^{-tB})$ leaves $\K_+$ invariant is not necessary for the proof of (i)$\implies$(iii) because it is implied by (i), see Lemma \ref{domination_sums}.

\item If $\a$ is dominated by $\b$, $f_1\in D(\a)$ and $g\in D(\b)_+$ with $g\leq S(f_1)$, there is an $f_2\in D(\a)$ such that $f_1, f_2$ are paired with $S(f_2)=g$ since $D(\a)$ is a generalized ideal of $D(\b)$. The condition $g\leq S(f_1)$ cannot be dropped, as the following example shows:
Let $\E$ be the standard energy form on $\IR^n$, that is,
\begin{align*}
D(\E)=H^1(\IR^n), \E(u)=\int_{\IR^n}\abs{\nabla u}^2\,dx.
\end{align*}
Then $\E$ satisfies the first Beurling-Deny criterion, hence it is dominated by itself.  Let $f_1\in
C_c^\infty(\IR^n)$ be such that $\supp f_1\subset[-1,1]$ and
$f_1|_{[0,1]}\geq 0,\,f_1|_{[-1,0]}\leq 0$. Let $g\in
C_c^\infty(\IR^n)$, $g\geq 0$, $g|_{[-2,2]}=1$. If $f_1$ and $f_2$
are paired with $S(f_2)=g$, then $f_2(x)=g(x)\sgn f_1(x)=\sgn (x)$ for all $x\in
[-1,1]$. Hence, $f_2\notin H^1(\IR^n)$.
\item
It is interesting to note that a stronger assumption on $\K_+$ is
needed for the implication (iii)$\implies$(i) while the theorem in
\cite{HSU} works without further assumption on $\K_+$. However, it
is obvious that our proof strategy strongly relies on the fact that
$\K_+$ is an isotone projection cone and $\K$ therefore a Riesz
space. The proof in \cite{HSU} also does not carry over to our
situation.
\end{itemize}
\end{remark}

\section{Applications}\label{Applications}
In this section we present examples for our main theorem. In these
examples the Hilbert space $\H$ will be given by a Hermitian vector
bundle (see Example \ref{herm_vector_bundle}).

\medskip

For  convenience we start by  reformulating our  the theorem above
for the case of Hermitian vector bundles.

\begin{corollary}\label{cor_domination}
Let $X$ be a Lindelöf space, $m$ a Borel measure on $X$ and $E$ a Hermitian vector bundle over $X$, $A$ (resp. $B$) a lower semibounded, self-adjoint operator on $L^2(X,m;E)$ (resp. $L^2(X,m)$) and $\a$ (resp. $\b$) the associated form. Assume that $\b$ satsifies the first Beurling-Deny criterion.

The following assertions are equivalent:
\begin{itemize}
\item[(i)]The semigroup $(e^{-tA})$ is dominated by $(e^{-tB})$.
\item[(ii)]The domain $D(\a)$ is a generalized ideal of $D(\b)$ and
\begin{align*}
\Re\a(u,\tilde u)\geq \b(\abs{u},\abs{\tilde u})
\end{align*}
holds for all $u,\tilde u\in D(\a)$ satisfying $\langle u(x), \tilde u(x)\rangle_x=\abs{u(x)}_x\abs{\tilde u(x)}_x$ for almost all $x\in X$.
\end{itemize}
\end{corollary}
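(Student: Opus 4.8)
The plan is to read this corollary as the concrete instance of Theorem \ref{thm_char_domination} obtained by specializing the abstract data to the bundle situation at hand. I would take $\K=L^2(X,m)$ with positive cone $\K_+=L^2_+(X,m)$, and let $S\colon\H=L^2(X,m;E)\lra\K_+$ be the fiberwise norm $S(f)(x)=\abs{f(x)}_x$; choosing $-\lambda$ to be a common lower bound for $A$ and $B$, the forms $\a,\b$ fit the framework of the theorem. My first task is then to verify that these choices satisfy its standing hypotheses.

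For this I would invoke Example \ref{example_L2} to see that $\K_+$ is a self-dual isotone projection cone, and Example \ref{herm_vector_bundle} to see that $S$ is an absolute pairing — here the Lindelöf assumption on $X$ is precisely what guarantees the nowhere vanishing measurable section required in that example. Since $\b$ satisfies the first Beurling-Deny criterion, Corollary \ref{char-pos-form-via-cone} shows that $(e^{-tB})$ leaves $\K_+$ invariant, so all hypotheses of Theorem \ref{thm_char_domination} are in force. Because $\K_+$ is a self-dual isotone projection cone, that theorem yields the equivalence of its assertions (i) and (iii). Assertion (i) there is verbatim assertion (i) here, so it remains to identify assertion (iii) — that $\a$ is dominated by $\b$ — with assertion (ii) of the corollary.

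The generalized-ideal condition occurs identically in both, so the only substantive point is to translate the form inequality. Since $S(u)$ is the function $x\mapsto\abs{u(x)}_x$, which is exactly $\abs{u}$ in the notation of the corollary, the right-hand sides $\b(S(f_1),S(f_2))$ and $\b(\abs{u},\abs{\tilde u})$ coincide. The heart of the matter is therefore to show that two elements $u,\tilde u\in D(\a)$ are \emph{paired} in the sense of (S2) if and only if they satisfy the pointwise alignment condition $\langle u(x),\tilde u(x)\rangle_x=\abs{u(x)}_x\abs{\tilde u(x)}_x$ for almost every $x$. I would establish this from the fiberwise Cauchy--Schwarz inequality $\abs{\langle u(x),\tilde u(x)\rangle_x}\leq\abs{u(x)}_x\abs{\tilde u(x)}_x$: being paired means $\langle u,\tilde u\rangle=\langle S(u),S(\tilde u)\rangle$, i.e. $\int_X\langle u(x),\tilde u(x)\rangle_x\,dm(x)=\int_X\abs{u(x)}_x\abs{\tilde u(x)}_x\,dm(x)$. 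The right-hand side is real, forcing the imaginary part of the left integral to vanish; comparing real parts, the integrand $x\mapsto\Re\langle u(x),\tilde u(x)\rangle_x$ is pointwise dominated by $x\mapsto\abs{u(x)}_x\abs{\tilde u(x)}_x$ while the two integrals agree, so equality holds almost everywhere. Combined with the Cauchy--Schwarz bound this also forces the pointwise imaginary part to vanish, giving the full identity a.e.; the converse is immediate by integrating the pointwise equality.

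The only step requiring genuine care is this last equivalence between the abstract pairing relation and the pointwise alignment condition; everything else is a matter of matching the objects of Theorem \ref{thm_char_domination} with their concrete counterparts. Once the equivalence is in hand, assertions (ii) here and (iii) there are literally the same statement, and the corollary follows.
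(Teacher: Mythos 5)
Your proposal is correct and follows essentially the same route as the paper: the paper's proof likewise reduces everything to showing that $u,\tilde u$ are paired if and only if $\langle u(x),\tilde u(x)\rangle_x=\abs{u(x)}_x\abs{\tilde u(x)}_x$ almost everywhere, and establishes this by the same Cauchy--Schwarz/vanishing-integrand argument. Your additional care in separating real and imaginary parts, and your explicit verification of the standing hypotheses of Theorem \ref{thm_char_domination} (Lindelöf giving the absolute pairing, the first Beurling--Deny criterion giving invariance of the cone), only makes explicit what the paper leaves implicit.
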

\begin{proof}
It suffices  to show that $u,\tilde u\in L^2(X,m;E)$ are paired if
and only if $\langle u(x),\tilde
u(x)\rangle_x=\abs{u(x)}_x\abs{\tilde u(x)}_x$ holds for almost all
$x\in X$.

So, let us assume first that $\langle u(x),\tilde
u(x)\rangle_x=\abs{u(x)}_x\abs{\tilde u(x)}_x$ holds for almost all
$x\in X$. Then,
$$ \langle u,\tilde u\rangle_{L^2(X,m;E)}=\int_X\langle u(x),\tilde u(x)\rangle_x\,dm(x)\\
=\int_X\abs{u(x)}_x\abs{\tilde u(x)}_x\,dm(x) =\langle
\abs{u},\abs{\tilde u}\rangle_{L^2(X,m)}.$$ Hence $u$ and $\tilde u$
are paired.

Conversely, assume that $u$ and $\tilde u$ are paired. Then
\begin{align*}
0=\langle \abs{u},\abs{\tilde u}\rangle_{L^2(X,m)}-\langle u,\tilde
u\rangle_{L^2(X,m;E)}=\int_X(\abs{u(x)}_x\abs{\tilde u(x)}_x-\langle
u(x),\tilde u(x)\rangle_x)\,dm(x).
\end{align*}
Moreover, the integrand is positive by Cauchy-Schwarz inequality.
Therefore it must be zero almost everywhere.
\end{proof}

We now present three classes of examples where this can be applied.

\subsection{Perturbation by potentials}
Let $(X,\mathcal{B},m)$ be a measure space and $(Q_t)$ a positivity preserving semigroup on $L^2(X,m)$ with associated form $\b$. In Example \ref{ex_positive_semigroup_dominating} it was remarked that $(Q_t)$ is dominated by itself, hence $D(\b)$ is a generalized ideal in itself and
\begin{align*}
\b(u,v)\geq\b(\abs{u},\abs{v})
\end{align*}
holds for all $u,v\in D(\b)$ satisfying $uv=\abs{u}\abs{v}$.  Now let $V\colon X\lra [0,\infty)$ be measurable
and define the form $\a$ via
\begin{align*}
D(\a)=\{u\in D(\b)\mid V^{\frac 1 2}u\in L^2(X,m)\},\,\a(u)=\b(u)+\int_X V\abs{u}^2\,dm.
\end{align*}
Then $\a$ is dominated by $\b$.

Indeed, if $u\in D(\a)\subset D(\b)$ and $v\in D(\b)$ with $v\leq \abs{u}$, then $\abs{u},v\sgn u\in D(\b)$ since $\b$ is a generalized ideal of itself. Moreover,
\begin{align*}
\int_X V\abs{v\sgn u}^2\,dm\leq \int_X V\abs{u}^2\,dm<\infty,
\end{align*}
hence $v\sgn u\in D(\a)$. Thus $D(\a)$ is a generalized ideal of $D(\b)$.

Finally, if $u,v\in D(\a)$ with $u v=\abs{u}\abs{v}$, then
\begin{align*}
 \a(u,v)=\b(u,v)+\int_V uv\,dm= \b(u,v)+\int_X V\abs{u}\abs{v}\,dm\geq b(\abs{u},\abs{v}).
\end{align*}

\subsection{Regular Schr\"odinger bundle on manifolds}
In this section we briefly discuss the setup of Schr\"odinger
bundles on manifolds.  For further background we refer to
\cite{BG17,BMS02,Gue14,HSU80} as well.

Let $(M,g,\mu)$ be a weighted Riemannian manifold, that is, $(M,g)$
is a Riemannian manifold and $\mu=e^{-\psi}\vol_g$ for some $\psi\in
C^\infty(M)$. A \emph{regular Schr\"odinger bundle} (see
\cite{BG17}) is a triple $(E,\nabla,V)$ consisting of
\begin{itemize}
\item a complex Hermitian vector bundle $E$ over $M$,
\item a metric covariant derivative $\nabla$ on $E$,
\item a potential $V\in L^1_\loc(M;\End(E))_+$.
\end{itemize}
We denote by $\Gamma(M;E)$ the space of smooth sections and by
$\Gamma_c(M;E)$ the subspace of compactly supported smooth sections.
If $E\lra M$ is Hermitian, we write $\langle\cdot|\cdot\rangle$ and
$\abs\cdot$ for the inner product and induced norm on the fibers,
respectively.

\begin{example}
If $\eta\in\Gamma(M;T^\ast M)$, then $\nabla=d+i\eta$ is a metric
covariant derivative on the trivial complex line bundle
$M\times\IC\lra M$. Thus, magnetic Schrödinger operators with
electric potential are naturally included in this setting.
\end{example}

\begin{definition}[Schrödinger form with Neumann boundary conditions]
Let
\begin{align*}
W^{1,2}(M;E)&=\{\Phi\in L^2(M;E)\mid \nabla \Phi\in L^2(M;E\otimes
T^\ast_\IC M)\},
\end{align*}
where $\nabla$ is to be understood in the distributional sense. The
space $W^{1,2}_\loc(M;E)$ is defined accordingly.

The Schrödinger form with Neumann boundary conditions $
\E^{(N)}_{\nabla,V}$ is defined by
\begin{align*}
D( \E^{(N)}_{\nabla,V})&=\{\Phi\in W^{1,2}(M;E)\mid \langle V\Phi|\Phi\rangle\in L^2(M)\},\\
 \E^{(N)}_{\nabla,V}(\Phi)&=\int_M \abs{\nabla\Phi}^2\,d\mu+\int_M\langle V\Phi|\Phi\rangle\,d\mu.
\end{align*}
\end{definition}
Just as in the scalar case one shows that $ \E^{(N)}_{\nabla,V}$ is
closed.

In the scalar case when $\nabla$ is simply the exterior derivative
on functions, we will write $\E^{(N)}_V$ for $ \E^{(N)}_{d,V}$  and
simply $\E^{(N)}$  if $V=0$. It is well-known that the forms
$\E^{(N)}_V$ is a  Dirichlet forms (see e.g. \cite{Fu}, Section
1.2). In particular, it satisfies the first Beurling-Deny criterion.
Now, the following result is proven in \cite{LSW-stabil}.

\begin{proposition}\label{domination_manifolds}
If $(E,\nabla,V)$ is a regular Schrödinger bundle and $W\in
L^1_\loc(MSW)_+$ such that $V_x\geq W_x$ in the sense of quadratic
forms for a.e. $x\in M$, then $ \E^{(N)}_{\nabla,V}$ is dominated by
$\E^{(N)}_{W}$.
\end{proposition}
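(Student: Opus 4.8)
The plan is to verify the two conditions characterizing domination in Corollary \ref{cor_domination}, applied with $\H=L^2(M;E)$, $\K=L^2(M,\mu)$, $S=\abs{\cdot}$ the pointwise fiber norm, $\a=\E^{(N)}_{\nabla,V}$ and $\b=\E^{(N)}_W$; recall that $\E^{(N)}_W$ is a Dirichlet form and hence satisfies the first Beurling--Deny criterion, so the hypothesis of that corollary is met. Everything reduces to one pointwise computation, a polarized covariant Kato (diamagnetic) inequality: for sections $\Phi,\Psi$ that are paired in the sense of Corollary \ref{cor_domination}, i.e.\ $\langle\Phi(x),\Psi(x)\rangle_x=\abs{\Phi(x)}_x\abs{\Psi(x)}_x$ a.e., one should have
\[
\Re\langle\nabla\Phi,\nabla\Psi\rangle\geq\langle d\abs{\Phi},d\abs{\Psi}\rangle \quad\text{a.e.}
\]
To see where this comes from, write, away from the common zeros, $\Phi=u\,s$ and $\Psi=h\,s$ with $u=\abs{\Phi}$, $h=\abs{\Psi}$, and $s=\Phi/\abs{\Phi}$ the unit direction that the pairing forces $\Phi$ and $\Psi$ to share. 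Then $\nabla\Phi=du\otimes s+u\nabla s$ and $\nabla\Psi=dh\otimes s+h\nabla s$, and since $\abs{s}\equiv 1$ gives $\Re\langle s,\nabla s\rangle=\tfrac12\,d\abs{s}^2=0$, the mixed terms drop out, leaving
\[
\Re\langle\nabla\Phi,\nabla\Psi\rangle=\langle du,dh\rangle+uh\,\abs{\nabla s}^2\geq\langle du,dh\rangle=\langle d\abs{\Phi},d\abs{\Psi}\rangle.
\]
Taking $\Phi=\Psi$ recovers the usual diamagnetic inequality $\abs{d\abs{\Phi}}\leq\abs{\nabla\Phi}$.

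With this in hand I would first check that $D(\E^{(N)}_{\nabla,V})$ is a generalized ideal of $D(\E^{(N)}_W)$. For (I1), if $\Phi\in D(\E^{(N)}_{\nabla,V})$ then the diamagnetic inequality gives $\abs{d\abs{\Phi}}\leq\abs{\nabla\Phi}\in L^2$, so $\abs{\Phi}\in W^{1,2}(M)$, while $V_x\geq W_x$ in the form sense yields $\int_M W\abs{\Phi}^2\,d\mu\leq\int_M\langle V\Phi|\Phi\rangle\,d\mu<\infty$; hence $\abs{\Phi}\in D(\E^{(N)}_W)$. For (I2), given $g\in D(\E^{(N)}_W)_+$ with $g\leq\abs{\Phi}$, the unique paired partner is $\Psi=g\,\sgn\Phi$, so $\abs{\Psi}=g$, and it remains to place $\Psi$ in $D(\E^{(N)}_{\nabla,V})$. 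The potential bound is immediate from $g\leq\abs{\Phi}$, since $\langle V\Psi|\Psi\rangle=(g/\abs{\Phi})^2\langle V\Phi|\Phi\rangle\leq\langle V\Phi|\Phi\rangle$ is integrable; the membership $\Psi\in W^{1,2}(M;E)$ follows from the product rule $\nabla\Psi=dg\otimes\sgn\Phi+g\,\nabla\sgn\Phi$ once one observes that $g\leq\abs{\Phi}$ tames the singular factor $\nabla\sgn\Phi$ near the zero set of $\Phi$.

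Once the ideal structure is established, the form inequality $\Re\E^{(N)}_{\nabla,V}(\Phi,\Psi)\geq\E^{(N)}_W(\abs{\Phi},\abs{\Psi})$ for paired $\Phi,\Psi\in D(\E^{(N)}_{\nabla,V})$ splits into its gradient and potential parts. Integrating the polarized Kato inequality gives $\Re\int_M\langle\nabla\Phi,\nabla\Psi\rangle\,d\mu\geq\int_M\langle d\abs{\Phi},d\abs{\Psi}\rangle\,d\mu$, while for the potential term the pairing relation $\Psi=(\abs{\Psi}/\abs{\Phi})\,\Phi$ together with $V_x\geq W_x$ gives $\Re\langle V\Phi|\Psi\rangle=(\abs{\Psi}/\abs{\Phi})\langle V\Phi|\Phi\rangle\geq W\abs{\Phi}\abs{\Psi}$ a.e., both sides vanishing on $\{\Phi=0\}$. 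Adding the two estimates is exactly condition (ii) of Corollary \ref{cor_domination}, so $\E^{(N)}_{\nabla,V}$ is dominated by $\E^{(N)}_W$.

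The main obstacle is rigor in the pointwise Kato computation. The identities above are transparent for smooth sections away from the zeros of $\Phi$, but $\Phi$ lies only in $W^{1,2}$ and $\sgn\Phi$ is genuinely singular where $\Phi$ vanishes. Making the argument precise requires the standard regularization, replacing $\abs{\Phi}$ by $(\abs{\Phi}^2+\epsilon^2)^{1/2}$ and $\sgn\Phi$ by $\Phi/(\abs{\Phi}^2+\epsilon^2)^{1/2}$, establishing the inequalities for the regularized objects, controlling the covariant derivatives uniformly in $\epsilon$, and passing to the limit $\epsilon\downto 0$ while using that $d\abs{\Phi}=0$ a.e.\ on $\{\abs{\Phi}=0\}$. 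This local, measure-theoretic analysis is precisely the delicate work carried out in \cite{LSW-stabil}; the abstract characterization of Corollary \ref{cor_domination} is what reduces the theorem to it.
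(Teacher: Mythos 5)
The paper does not actually prove Proposition \ref{domination_manifolds}; it is quoted from the companion preprint \cite{LSW-stabil}, so there is no in-paper argument to compare against. That said, your outline follows exactly the route the paper sets up: reduce to Corollary \ref{cor_domination}, verify that $D(\E^{(N)}_{\nabla,V})$ is a generalized ideal of $D(\E^{(N)}_W)$, and establish the form inequality via a polarized covariant Kato (diamagnetic) inequality together with the fiberwise bound $V_x\geq W_x$. The algebra is right: the pairing condition forces $\Psi$ to be a nonnegative fiberwise multiple of $\Phi$ off the zero set, the cross terms vanish because $\nabla$ is metric (so $\Re\langle s,\nabla s\rangle=0$ for a unit section $s$), and the potential estimate $\Re\langle V\Phi|\Psi\rangle\geq W\abs{\Phi}\abs{\Psi}$ follows from $\langle V\Phi|\Phi\rangle\geq W\abs{\Phi}^2$.

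However, as a proof your proposal is incomplete at precisely the two points you flag, and these are not routine. First, the claim that $\Psi=g\,\sgn\Phi$ lies in $W^{1,2}(M;E)$ is asserted via a formal product rule; $\sgn\Phi$ need not be weakly differentiable on $\{\Phi=0\}$, and showing that $0\leq g\leq\abs{\Phi}$ really does produce a weak covariant derivative with $\abs{g\,\nabla\sgn\Phi}\leq 2\abs{\nabla\Phi}$ requires the $(\abs{\Phi}^2+\epsilon^2)^{1/2}$ regularization, uniform $L^2$ bounds, and a limiting argument. Second, the polarized Kato inequality $\Re\langle\nabla\Phi,\nabla\Psi\rangle\geq\langle d\abs{\Phi},d\abs{\Psi}\rangle$ is derived only for smooth sections away from the zero set; for $W^{1,2}$ sections one must again regularize and use that $d\abs{\Phi}=0$ a.e.\ on $\{\Phi=0\}$ (and that $\nabla\Phi=0$ a.e.\ there for a metric connection). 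Since these analytic steps are the entire content of the proposition beyond the abstract reduction, deferring them to \cite{LSW-stabil} is legitimate as a matter of citation --- it is what the paper itself does --- but it means your text is a correct roadmap rather than a self-contained proof.
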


\begin{remark}\label{kato-formal}
 a) A distributional  version
of Kato's inequality in this setting was first proven  by Hess,
Schrader, Uhlenbrock \cite{HSU80} (for compact manifolds and
vanishing potentials) based on arguments originally due to Kato
\cite{Kat72} for magnetic Schr\"odinger operators. Their
considerations do not  include discussion of domains of the
operators or forms and, therefore,
 do
not allow one to conclude domination. Our reasoning, which relies on
the same method, can be seen as a completion of their result.

b) For open manifolds, the same domination has been proven by
G\"uneysu \cite{Gue14}, proof of Proposition 2.2) for
\emph{Dirichlet boundary conditions} (and vanishing potentials)
using methods from stochastic analysis and the semigroup
characterization of domination.

\end{remark}

\subsection{Magnetic Schr\"odinger forms on
graphs}\label{Applications-grph} In this section we will study
discrete analoga of the Laplacian respectively of  magnetic
Schr\"odinger operators in Euclidean space.

In our setup we  essentially follow the works
\cite{KL10},\cite{KL12} for graphs and Dirichlet forms over discrete
spaces and  the article \cite{MT15} for vector bundles over graphs
and magnetic Schr\"odinger operators. Further discussion can be
found in these references.

\begin{definition}[Weighted graph]
A weighted graph $(X,b,c,m)$ consists of an (at most) countable set
$X$, an edge weight $b\colon X\times X\lra [0,\infty)$, a killing
term $c\colon X\lra[0,\infty)$ and a measure $m\colon
X\lra(0,\infty)$ subject to the following conditions for all $x,y\in
X$:
\begin{itemize}
\item[(b1)]$b(x,x)=0$,
\item[(b2)]$b(x,y)=b(y,x)$,
\item[(b3)]$\sum_{z\in X}b(x,z)<\infty$.
\end{itemize}
\end{definition}

Note that our graphs are not assumed to be locally finite (i.e. to
satisfy that   $\{y\in X\mid b(x,y)>0\}$ is finite for all $y\in
X$).  We only  assume that the edge weights are  summable (at each
vertex.

We shall consider  $X$ as a discrete topological space and, hence,
write  $C_c(X)$ for  the space of functions on $X$ with finite
support.  We can (and will) consider the function  $m$  as a measure
on the power set  $\mathcal{P}(X)$ of $X$ via
\begin{align*}
m(A):=\sum_{x\in A} m(x),\;A\subset X.
\end{align*}
The corresponding $L^2$-space is denoted by $\ell^2(X,m)$.

Any graph comes with a formal Laplacian $\tilde L$ acting on
$$
\{f : X\longrightarrow \IR : \sum_{y} b(x,y) |f(y)| <\infty \mbox{
for all $x\in X$}\}$$ by
$$\tilde L f (x) = \frac{1}{m(x)} \sum_{y\in X} b(x,y) (f(x) - f(y))
+ \frac{c(x)}{m(x)} f(x)$$ for $x\in X$.

Whenever $F$ is a Hermitian vector bundle  over $X$ we write
\begin{align*}
\Gamma(X;F)&=\{u\colon X\lra\prod_{x\in X}F_x\mid u(x)\in F_x\},\\
\Gamma_c(X;F)&=\{u\in\Gamma(X;F)\mid \supp u\text{ finite}\},\\
\ell^2(X,m;F)&=\{u\in\Gamma(X;F)\mid\sum_{x\in X}\langle
u(x),u(x)\rangle_x m(x)<\infty\}
\end{align*}
for the space of all sections, the space of all sections with
compact support and the space of all $L^2$-sections respectively.
The latter becomes a Hilbert space when  equipped with the inner
product
\begin{align*}
\langle
\cdot,\cdot\rangle_{\ell^2(X,m;F)}\colon\ell^2(X,m;F)\times\ell^2(X,m;F)\lra\IC,\,(u,v)\mapsto\sum_{x\in
X}\langle u(x),v(x)\rangle_x m(x).
\end{align*}
A bundle endomorphism $W$ of a Hermitian vector bundle $F$ is a
family of linear maps $(W(x)\colon F_x\lra F_x)_{x\in X}$.

\smallskip

Consider now a weighted graph $(X,b,c,m)$, a Hermitian vector bundle
$F$  over $X$ and $W$ a bundle endomorphism of $F$ that is pointwise
positive, that is, $\langle W(x)v,v\rangle_x\geq 0$ for all $x\in
X,\,v\in F_x$. Let moreover a family of unitary maps (called
connection maps) $\Phi_{x,y}\colon F_y\lra F_x$ for all $x,y\in X$
be given  such that $\Phi_{x,y}=\Phi_{y,x}^{-1}$.

\smallskip

Now we can define the basic object of our interest, the magnetic
Schr\"odinger form (with Dirichlet and Neumann boundary conditions).

\begin{definition}[Magnetic form with Neumann boundary conditions]\label{def_magnetic_graph_Neumann}
For $u\in \Gamma(X;F)$ let
\begin{align*}
\tilde Q_{\Phi,b,W}(u)=\frac 1
2\sum_{x,y}b(x,y)\abs{u(x)-\Phi_{x,y}u(y)}_x^2+\sum_{x}\langle
W(x)u(x),u(x)\rangle_x\in[0,\infty].
\end{align*}
The magnetic Schr\"odinger form with Neumann boundary conditions is
defined via
\begin{align*}
D(Q^{(N)}_{\Phi,b,W})&=\{u\in\ell^2(X,m;F)\mid\tilde Q_{\Phi,b,W}(u)<\infty\},\\
Q^{(N)}_{\Phi,b,W}(u)&=\tilde Q_{\Phi,b,W}(u).
\end{align*}
\end{definition}

To abridge notation, we will write $\norm{\cdot}_{\Phi,b,W}$ for the
form norm of $Q^{(N)}_{\Phi,b,W}$. By the same arguments as in the
Dirichlet form case, the form $Q^{(N)}_{\Phi,b,W}$ is closed (see
\cite{KL12}, Lemma 2.3). The form   $Q^{(N)}_{b,c}$ is of particular
interest as it is  a Dirichlet form. Given these preparations we can
now state the following result from \cite{LSW-stabil}.

\begin{proposition}\label{domination_magnetic_forms}
Assume that $\langle W(x)u(x),u(x)\rangle_x\geq c(x)\abs{u(x)}^2$
for all $x\in X,\,u(x)\in F_x$. Then $Q^{(N)}_{\Phi,b,W}$ is
dominated by $Q^{(N)}_{b,c}$.
\end{proposition}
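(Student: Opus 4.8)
The plan is to read the conclusion as the definition of domination of forms and to verify it directly; equivalently, this is exactly assertion (ii) of Corollary \ref{cor_domination}, applied with $\a=Q^{(N)}_{\Phi,b,W}$, $\b=Q^{(N)}_{b,c}$, the (countable, hence Lindelöf) discrete space $X$, the Hermitian bundle $F$, and the absolute pairing $S=\abs{\cdot}\colon\ell^2(X,m;F)\lra\ell^2_+(X,m)$. Both forms are closed and nonnegative, $Q^{(N)}_{b,c}$ is a Dirichlet form (so it satisfies the first Beurling--Deny criterion), and by the computation in the proof of Corollary \ref{cor_domination} two sections $u,\tilde u$ are paired if and only if $\langle u(x),\tilde u(x)\rangle_x=\abs{u(x)}_x\abs{\tilde u(x)}_x$ for all $x\in X$. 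Thus it suffices to verify two things: that $D(\a)$ is a generalized ideal of $D(\b)$, and that $\Re\a(u,\tilde u)\geq\b(\abs{u},\abs{\tilde u})$ for all such pointwise aligned $u,\tilde u\in D(\a)$. I would check all of this edge-by-edge and vertex-by-vertex.

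For the generalized ideal, (I1) is a diamagnetic inequality: since each $\Phi_{x,y}$ is unitary, the reverse triangle inequality gives $\bigl|\abs{u(x)}_x-\abs{u(y)}_y\bigr|\leq\abs{u(x)-\Phi_{x,y}u(y)}_x$, while the hypothesis $\langle W(x)v,v\rangle_x\geq c(x)\abs{v}_x^2$ controls the killing term, so $Q^{(N)}_{b,c}(\abs{u})\leq Q^{(N)}_{\Phi,b,W}(u)<\infty$ and hence $\abs{u}\in D(\b)$. For (I2), given $u\in D(\a)$ and $g\in D(\b)_+$ with $g\leq\abs{u}$, the candidate is $\tilde u=g\,\sgn u$, i.e. $\tilde u(x)=g(x)\,u(x)/\abs{u(x)}_x$ where $u(x)\neq0$ and $\tilde u(x)=0$ otherwise; then $\abs{\tilde u}=g$ and $u,\tilde u$ are paired, so the only real work is $\tilde u\in D(\a)$. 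The killing term is immediate from $g\leq\abs{u}$, and the main obstacle is the edge energy. Here the key estimate is
\begin{align*}
\abs{\tilde u(x)-\Phi_{x,y}\tilde u(y)}_x^2\leq(g(x)-g(y))^2+\abs{u(x)-\Phi_{x,y}u(y)}_x^2,
\end{align*}
which, writing $\tilde u(x)=sp$, $\Phi_{x,y}\tilde u(y)=tq$, $u(x)=Sp$, $\Phi_{x,y}u(y)=Tq$ with unit vectors $p,q\in F_x$ and $s=g(x)\leq S=\abs{u(x)}_x$, $t=g(y)\leq T=\abs{u(y)}_y$, follows from the elementary identity $\abs{sp-tq}^2=(s-t)^2+st\bigl(\abs{Sp-Tq}^2-(S-T)^2\bigr)/(ST)$ together with $0\leq st\leq ST$ (the degenerate cases $u(x)=0$ or $u(y)=0$ being handled directly). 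Summing against $\tfrac12 b(x,y)$ bounds the edge energy of $\tilde u$ by $Q^{(N)}_{b,c}(g)+Q^{(N)}_{\Phi,b,W}(u)<\infty$, so $\tilde u\in D(\a)$.

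For the form inequality on aligned pairs I would split into vertex and edge contributions and argue termwise. At a vertex, alignment forces $\tilde u(x)=t\,u(x)$ with $t=\abs{\tilde u(x)}_x/\abs{u(x)}_x\geq0$ wherever $u(x)\neq0$, whence $\Re\langle W(x)u(x),\tilde u(x)\rangle_x=t\langle W(x)u(x),u(x)\rangle_x\geq t\,c(x)\abs{u(x)}_x^2=c(x)\abs{u(x)}_x\abs{\tilde u(x)}_x$ by the hypothesis on $W$. On an edge, writing $a=u(x)$, $a'=\Phi_{x,y}u(y)$, $w=\tilde u(x)$, $w'=\Phi_{x,y}\tilde u(y)$ and using $\langle a,w\rangle_x=\abs{a}_x\abs{w}_x$ and (by unitarity of $\Phi_{x,y}$) $\langle a',w'\rangle_x=\abs{a'}_x\abs{w'}_x$, one finds
\begin{align*}
\Re\langle a-a',w-w'\rangle_x-(\abs{a}_x-\abs{a'}_x)(\abs{w}_x-\abs{w'}_x)=(\abs{a}_x\abs{w'}_x-\Re\langle a,w'\rangle_x)+(\abs{a'}_x\abs{w}_x-\Re\langle a',w\rangle_x),
\end{align*}
which is nonnegative by Cauchy--Schwarz. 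Summing the vertex contributions and the edge contributions weighted by $\tfrac12 b(x,y)\geq0$ (all convergent since $u,\tilde u\in D(\a)$) yields $\Re Q^{(N)}_{\Phi,b,W}(u,\tilde u)\geq Q^{(N)}_{b,c}(\abs{u},\abs{\tilde u})$. Together with (I1) and (I2), this is precisely assertion (ii) of Corollary \ref{cor_domination}, so $Q^{(N)}_{\Phi,b,W}$ is dominated by $Q^{(N)}_{b,c}$.
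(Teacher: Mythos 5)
Your proof is correct. Note, however, that the paper itself gives no proof of this proposition: it is stated as a result imported from the companion preprint \cite{LSW-stabil}, so there is no in-paper argument to compare against. Your direct verification of the definition of form domination (equivalently, of condition (ii) of Corollary \ref{cor_domination}, with the pointwise description of paired sections) is complete and sound: the reverse triangle inequality together with unitarity of the $\Phi_{x,y}$ gives (I1); the identity $\abs{sp-tq}^2-(s-t)^2 = 2st(1-\Re\langle p,q\rangle_x) = \tfrac{st}{ST}\bigl(\abs{Sp-Tq}^2-(S-T)^2\bigr)$ with $0\leq st\leq ST$ gives the edge estimate needed for (I2); and the termwise computation $\Re\langle a-a',w-w'\rangle_x-(\abs{a}_x-\abs{a'}_x)(\abs{w}_x-\abs{w'}_x)=(\abs{a}_x\abs{w'}_x-\Re\langle a,w'\rangle_x)+(\abs{a'}_x\abs{w}_x-\Re\langle a',w\rangle_x)\geq 0$ together with the hypothesis on $W$ gives the form inequality on aligned pairs. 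The only points worth making explicit in a final write-up are (a) that termwise inequalities may be summed because all four series converge absolutely (which you note), and (b) that $\langle W(x)v,v\rangle_x\geq 0$ for all $v$ forces $W(x)$ to be self-adjoint on the complex fiber, so the vertex term $\langle W(x)u(x),u(x)\rangle_x$ is real and the scaling argument $\Re\langle W(x)u(x),tu(x)\rangle_x=t\langle W(x)u(x),u(x)\rangle_x$ is legitimate.
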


\begin{corollary}
The form $Q_{b,c}^{(N)}$ is dominated by $Q^{(N)}_{b,0}$.
\end{corollary}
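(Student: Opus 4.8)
The plan is to realize the scalar Schr\"odinger form $Q^{(N)}_{b,c}$ as a magnetic Schr\"odinger form over the \emph{trivial} line bundle and then invoke Proposition \ref{domination_magnetic_forms} with the killing term of the graph set to zero. Concretely, I would apply that proposition to the weighted graph $(X,b,0,m)$, i.e.\ the same graph but with vanishing killing term, so that the dominating form becomes $Q^{(N)}_{b,0}$. For the dominated side I would take $F=X\times\IC$ to be the trivial Hermitian line bundle, $\Phi_{x,y}=\mathrm{id}$ for all $x,y\in X$ (which is unitary and satisfies $\Phi_{x,y}=\Phi_{y,x}^{-1}$), and the bundle endomorphism $W(x)=c(x)$ acting as multiplication by $c(x)$ on the fiber $F_x=\IC$.

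With these choices the magnetic form unwinds to the scalar form: since $\abs{u(x)-\Phi_{x,y}u(y)}_x=\abs{u(x)-u(y)}$ and $\langle W(x)u(x),u(x)\rangle_x=c(x)\abs{u(x)}^2$, one reads off
\begin{align*}
\tilde Q_{\Phi,b,W}(u)&=\frac 1 2\sum_{x,y}b(x,y)\abs{u(x)-u(y)}^2+\sum_x c(x)\abs{u(x)}^2\\
&=\tilde Q_{b,c}(u),
\end{align*}
so that $Q^{(N)}_{\Phi,b,W}=Q^{(N)}_{b,c}$ with identical form domains. This identification is the one genuine point to verify, namely that moving the potential $c$ from the scalar killing term into the bundle endomorphism $W$ leaves the form unchanged.

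It then remains only to check the hypothesis of Proposition \ref{domination_magnetic_forms} for the graph $(X,b,0,m)$, namely $\langle W(x)u(x),u(x)\rangle_x\geq 0\cdot\abs{u(x)}^2=0$ for all $x\in X$ and $u(x)\in F_x$. This is immediate from $W(x)=c(x)\geq 0$, which holds because $c$ is a killing term and hence nonnegative by the very definition of a weighted graph. Proposition \ref{domination_magnetic_forms} then yields that $Q^{(N)}_{\Phi,b,W}=Q^{(N)}_{b,c}$ is dominated by $Q^{(N)}_{b,0}$, as claimed.

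I expect no real obstacle here: the statement is a direct specialization of Proposition \ref{domination_magnetic_forms}. The only step requiring attention is the bookkeeping above, where choosing the dominating killing term to be zero makes the pointwise positivity hypothesis on $W$ trivially satisfied, precisely because $c$ is itself nonnegative.
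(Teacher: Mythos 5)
Your proposal is correct and takes essentially the same route as the paper: the paper's proof likewise specializes Proposition \ref{domination_magnetic_forms} to the trivial line bundle $F_x=\IC$ with $\Phi_{x,y}=1$, $W(x)=c(x)$, and vanishing killing term, so that the hypothesis reduces to $c\geq 0$. Your write-up merely makes explicit the bookkeeping that the paper leaves implicit.
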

\begin{proof}
This follows from Proposition \ref{domination_magnetic_forms} by
taking $F_x=\IC$, $W(x)=c(x)$ and $\Phi_{x,y}=1$ for all $x,y\in X$.
\end{proof}

\end{document}